\documentclass[12pt,reqno,a4paper]{amsart}
\usepackage{amssymb}
\usepackage{amsthm}
\usepackage{tikz}
\usepackage{verbatim}
\usepackage{hyperref}
\usepackage[english]{babel}
\usepackage{wasysym}
\usepackage[shortlabels]{enumitem}
\usepackage[colorinlistoftodos, textsize=small]{todonotes}
\usepackage{color}
\usepackage{soul}
\usepackage{mathtools}
\usepackage{mathrsfs}
\usepackage{bm}
\usepackage{graphicx}
\usepackage{float}
\usetikzlibrary{arrows}

\newtheorem*{cla1}{Claim 1}
\newtheorem*{cla2}{Claim 2}
\newtheorem*{cla3}{Claim 3}
\newtheorem*{cla4}{Claim 4}

\newenvironment{claimproof}{\paragraph{\em Proof of claim.}}{\hfill\scalebox{0.7}{$\square$}}

\newtheorem{theorem}{Theorem}[section]

\newtheorem{lemma}[theorem]{Lemma}
\newtheorem{corollary}[theorem]{Corollary}
\newtheorem{proposition}[theorem]{Proposition}

\numberwithin{equation}{section}

\newcommand{\alg}[1]{\mathbf{#1}}
\newcommand{\rel}[1]{\mathbb{#1}}
\newcommand{\var}[1]{\mathcal{#1}}
\newcommand{\slt}{\mathcal{S}\mathcal{L}}
\newcommand{\dl}{\mathcal{D}\mathcal{L}}
\newcommand{\set}{\mathcal{S}\mathcal{E}\mathcal{T}}

\newcommand{\pol}{\operatorname{Pol}}
\newcommand{\clo}{\operatorname{Clo}}

\newcommand{\id}{\operatorname{id}}
\newcommand{\Id}{\operatorname{Id}}

\DeclareMathAlphabet\mathbfcal{OMS}{cmsy}{b}{n}

\begin{document}

\title[Connectivity notions on digraphs]
{Connectivity notions on compatible digraphs in equational classes}

\address{Bolyai Institute, Univ. of Szeged, Szeged, Aradi V\'{e}rtan\'{u}k tere 1, HUNGARY 6720}

\author{Gerg\H o Gyenizse} 
\email{gergogyenizse@gmail.com}    
           
\author{Mikl\'os Mar\'oti}
\email{mmaroti@math.u-szeged.hu} 
            
\author{L\'aszl\'o Z\'adori}
\email{zadori@math.u-szeged.hu}

\thanks{\thanks{The research of authors was supported by the NKFIH grants K138892 and  ADVANCED 153383,  and Project no TKP2021-NVA-09 where the latter has been financed by the Ministry of Culture and Innovation of Hungary from the National Research, Development and Innovation Fund.}}


\begin{abstract} 
A digraph $\rel G$ is called weakly connected, strongly connected, and extremely connected if any two vertices of $\rel G$ are connected respectively by an oriented, a directed, and a symmetric path in $\rel G$. We investigate the algebraic properties of digraphs that force some of these connectivity notions to coincide. 

We prove that for digraphs with a Hobby-McKenzie polymorphism, the strong and the extreme components coincide. Conversely, if the strong and the extreme components of any compatible digraph in an equational class of algebras coincide, then the class must have a Hobby-McKenzie term. As a consequence, we obtain that an equational class $\var V$ is $n$-permutable for some $n$ if and only if the weak components of any compatible reflexive digraph in $\var V$ are extremely connected.
\end{abstract}

\keywords{digraph; variety; interpretability; clone homomorphism; connectivity}

\maketitle

\section{Introduction}
In this article, we investigate the shape of compatible digraphs in various equational classes of algebras. Before we give motivation, we clarify some basic concepts that are used in our investigations. 

Throughout the text, we use bold face capitals and the same capitals to denote algebras and
their underlying sets, respectively, and  use blackboard bold capitals and the same capitals to denote digraphs and their vertex sets, respectively. We use calligraphic capital letters to denote varieties.

A digraph $\rel G$ is {\em weakly connected, strongly connected, extremely connected} if any two vertices of $\rel G$ are connected respectively by an oriented, a directed, and a symmetric path in $\rel G$. We need a fourth notion of connectedness, radically connected, that is a bit more complicated concept to define. We shall give the definition later in Section 2. All one needs to know for understanding the content of the Introduction is that for any digraph, the following implications hold: extremely connected $\Rightarrow$ radically connected $\Rightarrow$ strongly connected $\Rightarrow$ weakly connected. Each of these notions of connectivity naturally yields an equivalence relation on the vertex set of a digraph. We respectively call it the extreme, the radical, the strong, and the weak equivalence of the digraph. In Figure \ref{conn}, we displayed the inclusion ordering of these four equivalences of a digraph.

 \begin{figure}[H] 
\centering
\includegraphics[scale=1]{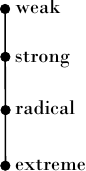}
\caption{The chain of equivalences related to the four connectivity notions.}
\label{conn}
\end{figure}

A {\em variety} (called also an {\em equational class}) is a class of all algebras of the same signature that satisfy a  set of identities in the given signature.  We call a digraph $\rel G$ {\em compatible} in a variety $\var V$ if there is an algebra $\alg G$ in $\var V$ whose underlying set coincides with the vertex set of $\rel G$,  and  the edge relation of $\rel G$ is a subalgebra of $\alg G^2$. Then, we also call $\rel G$  a {\em compatible digraph} of the algebra $\alg G$. 

Taylor varieties were introduced in \cite{GT} as the varieties whose full idempotent reduct does not interpret in the variety  $\set$ of sets. 
Taylor varieties were characterized in various ways, see for example, Theorem A.1. in \cite{KK}, and Lemma 4.2 and Corollary 4.3 in \cite{BGMZ0}. In  \cite{GMZ2}, we gave a characterization of Taylor varieties with the help of two connectivity notions of compatible reflexive digraphs. 
The equivalence of items (1) and (2) in the following theorem is a part of this characterization.  By taking into account, that radically connected implies strongly connected, and that the strong equivalence is the smallest equivalence which yields a cycle-free quotient of a digraph, items (2) and (3) are clearly equivalent. Item (3) suits more for our purposes in this article, as it gives direct connection between the equivalences related to two connectivity notions defined above. 

 \begin{theorem} [\cite{GMZ2}, cf. Theorem 4.4]
\label{th:taylor_equivalent_to_strong_equals_radical}
For any variety $\mathcal V$, the following are equivalent.
\begin{enumerate}
    \item $\var V$ is a Taylor variety.
    \item The quotient of every compatible reflexive digraph in $\var V$ by its radical equivalence is cycle-free.
    \item For any compatible reflexive digraph in $\var V$, the strong equivalence coincides with the radical equivalence.
\end{enumerate}
\end{theorem}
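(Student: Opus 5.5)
The theorem is essentially a repackaging of results from \cite{GMZ2}, so the plan is to take the equivalence (1)$\Leftrightarrow$(2) from there and argue (2)$\Leftrightarrow$(3) directly. That argument uses only two facts recalled in the Introduction. First, radically connected implies strongly connected, so the radical equivalence is contained in the strong equivalence. Second, the strong equivalence is the smallest equivalence whose quotient of a digraph is cycle-free. Throughout, a cycle in a quotient means a directed cycle through more than one class; since every quotient of a reflexive digraph carries loops, loops cannot be what is meant.

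For (1)$\Rightarrow$(2) and (2)$\Rightarrow$(1), I would invoke Theorem 4.4 of \cite{GMZ2} as stated. If a self-contained sketch were wanted, it would run along these lines.
\begin{itemize}
\item[(a)] For (1)$\Rightarrow$(2), assume the quotient by the radical equivalence contains a directed cycle. Use a Taylor (for instance, a cyclic or Siggers) polymorphism to show that its vertices are in fact radically related. This contradicts the cycle passing through more than one class.
\item[(b)] For (2)$\Rightarrow$(1), assume $\var V$ is not Taylor. Then its idempotent reduct interprets in $\set$, and one builds a compatible reflexive digraph, such as a reflexive directed cycle on a suitable free-algebra quotient. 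This digraph is strongly connected but has trivial radical equivalence, so its radical quotient is itself a cycle.
\end{itemize}
This reconstruction is the main obstacle, which is why citing \cite{GMZ2} is the intended route.

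For (2)$\Rightarrow$(3), let $\rel G$ be a compatible reflexive digraph and let $\rho$ be its radical equivalence. By (2), $\rel G/\rho$ is cycle-free. By the minimality of the strong equivalence $\sigma$ among equivalences with cycle-free quotient, $\sigma\subseteq\rho$. Combined with $\rho\subseteq\sigma$, this gives $\rho=\sigma$.

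For (3)$\Rightarrow$(2), the radical equivalence equals the strong equivalence by (3). The quotient by the strong equivalence is cycle-free: a directed cycle of strong classes would place all of them in a single strong component. So the radical quotient is cycle-free.
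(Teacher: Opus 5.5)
Your proposal is correct and follows the paper's own route: (1)$\Leftrightarrow$(2) is cited from \cite{GMZ2}, and (2)$\Leftrightarrow$(3) comes, exactly as you argue, from two facts — the radical equivalence is contained in the strong equivalence, and the strong equivalence is the least equivalence with a cycle-free quotient. One small caution about your optional sketch (a): cyclic and Siggers terms need not exist in an arbitrary (non-locally-finite) Taylor variety, so you would need something like the Ol\v s\'ak term the paper uses in Lemma~\ref{l:Olsak_means_k_connected_is_d_connected}.
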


In \cite{HaM}, Hagemann and Mitschke gave a characterization of  $n$-permutable varieties via compatible reflexive digraphs. They proved that a variety $\var V$ is $n$-permutable if and only if for every edge $a\to b$ of any compatible reflexive digraph $\rel G$ in $\var V$, there exists a directed  path with length at most $n-1$ from $b$ to $a$ in $\rel G$. We call a variety a {\em Hagemann-Mitschke variety} if it is $n$-permutable for some $n\geq 2$. In \cite{VW}, Valeriote and Willard proved that Hagemann-Mitschke varieties are the varieties whose full idempotent reduct does not interpret in the variety $\dl$ of distributive lattices. 
By the use of Theorem 4.4 in \cite{GMZ2}, we obtained a characterization of Hagemann-Mitschke varieties in terms of connectivity relations of compatible reflexive digraphs. The equivalence of items (1) and (2) in the following theorem is a part of this characterization. Moreover, it is not hard to see that items (2) and (3) are equivalent. One just has to use the facts that radically connected implies weakly connected, and that for a digraph, the weak equivalence is the smallest equivalence which yields a quotient that equals a disjoint union of loops. Note that, similarly as in Theorem \ref{th:taylor_equivalent_to_strong_equals_radical},  item (3) here also gives a direct link between equivalences related to two connectivity notions defined above.

\begin{theorem}[\cite{GMZ2}, cf. Corollary 4.5]\label{HoM} The following are equivalent for a variety $\var V$.
\begin{enumerate}
    \item  $\var V$ is a Hagemann-Mitschke variety. 
    \item The quotient of every compatible reflexive digraph in $\var V$ by its radical equivalence is a disjoint union of loops.
     \item For any compatible reflexive digraph in $\var V$, the  weak equivalence  coincides with the radical equivalence.
\end{enumerate}
\end{theorem}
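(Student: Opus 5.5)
The plan is to use the text's remark that (1)$\Leftrightarrow$(2) is the part of the characterization already obtained in \cite{GMZ2}. That implication comes from Theorem 4.4 of \cite{GMZ2} together with the Hagemann--Mitschke characterization. So the real work is (2)$\Leftrightarrow$(3), and I would also indicate how (1)$\Rightarrow$(3) follows directly, as a sanity check.

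For (1)$\Rightarrow$(3), let $\rel G$ be a compatible reflexive digraph in an $n$-permutable variety $\var V$, and let $a,b$ be weakly connected. By Hagemann--Mitschke, every edge $a\to b$ has a reverse directed path $b\to\cdots\to a$ of length at most $n-1$. Hence every oriented path can be replaced by a directed path, so the weak and strong equivalences coincide. A Hagemann--Mitschke variety is Taylor: it does not even interpret $\dl$, hence not $\set$. So Theorem \ref{th:taylor_equivalent_to_strong_equals_radical}(3) gives strong $=$ radical. Combining the two, weak $=$ radical.

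For (2)$\Leftrightarrow$(3) I would argue with quotients. Let $\theta$ be the radical equivalence and $\omega$ the weak equivalence. Since radically connected implies weakly connected, $\theta\subseteq\omega$. Assume (3). Then $\theta=\omega$, and $\rel G/\omega$ has no edges between distinct classes, since an edge joins weakly connected vertices. Reflexivity puts a loop at each class, so $\rel G/\theta$ is a disjoint union of loops. Conversely, assume (2). The weak equivalence is the smallest equivalence whose quotient is a disjoint union of loops: any such equivalence must contain every edge pair, hence every oriented path. Thus $\omega\subseteq\theta$, and with $\theta\subseteq\omega$ we get equality.

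The main obstacle is (2)$\Rightarrow$(1), that is, producing Hagemann--Mitschke terms from a purely digraph-theoretic hypothesis. This is where I rely on \cite{GMZ2}. The strategy there is to take the free algebra $\alg F$ in $\var V$ on two generators $x,y$ and the compatible reflexive digraph $\rel G$ generated by the edge $(x,y)$. Since $x,y$ are weakly connected, (2) gives $y$ radically connected to $x$, in particular by a directed path from $y$ to $x$. A directed path $y=t_0\to t_1\to\cdots\to t_{n-1}=x$ in this subalgebra of $\alg F^2$ yields ternary terms satisfying the Hagemann--Mitschke identities. Checking that these paths translate exactly into those identities (including the reflexivity bookkeeping via the diagonal) is the delicate step; as the statement permits, I would take it from Theorem 4.4 of \cite{GMZ2} rather than reprove it.
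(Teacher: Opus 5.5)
Your proposal is correct. The (2)$\Leftrightarrow$(3) part is exactly the paper's argument: radical $\subseteq$ weak, and the weak equivalence is the least equivalence whose quotient is a disjoint union of loops.

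For (1)$\Leftrightarrow$(2) the paper simply cites \cite{GMZ2}. You instead close the cycle through (3):
\begin{itemize}
\item (1)$\Rightarrow$(3) follows from the Hagemann--Mitschke edge-reversal characterization (weak $=$ strong) together with Theorem~\ref{th:taylor_equivalent_to_strong_equals_radical} (strong $=$ radical in Taylor varieties).
\item (2)$\Rightarrow$(1) uses the digraph freely generated by the two-element reflexive chain, and needs only that radical $\subseteq$ strong.
\end{itemize}
This route is more transparent. It shows that the only nontrivial input is Theorem~\ref{th:taylor_equivalent_to_strong_equals_radical}, used for (1)$\Rightarrow$(3), while the converse is entirely elementary.

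Two small corrections are needed.

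First, your last sentence defers the translation of the directed path into identities to Theorem 4.4 of \cite{GMZ2}. That theorem is the Taylor characterization and produces no Hagemann--Mitschke terms. The translation is the classical easy argument, and you have essentially written it already. Each edge $u_i\to u_{i+1}$ of the path $y=u_0\to\cdots\to u_k=x$ lies in the subalgebra generated by $(x,x),(x,y),(y,y)$. Hence $u_i=p_i(x,x,y)$ and $u_{i+1}=p_i(x,y,y)$ for some ternary term $p_i$. Freeness of $\alg F$ then turns $y=p_0(x,x,y)$, $p_i(x,y,y)=p_{i+1}(x,x,y)$ and $p_{k-1}(x,y,y)=x$ into identities of $\var V$. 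Nothing there is delicate, and no citation is needed.

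Second, the phrase \emph{does not even interpret $\dl$} has the direction reversed. What you need is that $\var V_{\Id}$ does not interpret in $\dl$. Since $\set$ interprets in $\dl$, it follows that $\var V_{\Id}$ does not interpret in $\set$, so $\var V$ is Taylor.
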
 

Hobby-McKenzie varieties were introduced in \cite{HM} as the varieties whose full idempotent reduct does not interpret in the variety $\slt$ of semilattices. Hobby-McKenzie varieties were characterized in various ways, see for example Theorem A.2 in \cite{KK}, and also Lemma 5.1 and Corollary 5.2 in \cite{BGMZ1}. At this point, we note that for the three types of varieties we introduced so far, we have the following implications (just by looking their characterizations via interpretation of their full idempotent reduct): $\var V$ is a Hagemann-Mitschke variety $\Rightarrow$ $\var V$ is a Hobby-McKenzie  variety $\Rightarrow$ $\var V$ is a Taylor variety.

In this article, we study the shape of compatible reflexive digraphs in Hobby-McKenzie varieties. Our goal is to present a similar characterization for  Hobby-McKenzie varieties as the ones for Taylor and  Hagemann-Mitschke varieties in the labeled theorems above. 


   

 In \cite{MZ}, the last two authors proved Theorem 2.9 which claims that  in a variety $\var V$ that has Gumm terms, every compatible finite strongly connected reflexive digraph is extremely connected. We note that existence of Gumm terms characterizes congruence modularity for a variety, and implies that the variety is Hobby-McKenzie.  We long have been conjectured that a generalization of Theorem 2.9 for Hobby-McKenzie varieties also holds. Namely, if $\var V$ is a Hobby-McKenzie variety, then every compatible strongly connected reflexive digraph in $\var V$ is extremely connected.  

In this article, we shall verify this conjecture.  In fact, we shall prove the stronger statement that a variety $\var V$ is a Hobby-McKenzie variety if and only if for any compatible reflexive digraph in $\var V$, the strong equivalence  coincides with the extreme equivalence. From this new description of Hobby-McKenzie varieties, it will then follow as a corollary that a variety $\var V$ is a Hagemann-Mitschke variety if and only if for any compatible reflexive digraph in $\var V$, the weak and the extreme equivalences coincide.

The structure of this paper is as follows. In Section 2,  we present some definitions related to varieties and  digraphs, and at the end of Section 2, prove some preliminary results related to two particular digraphs and Hobby-McKenzie varieties. In Section 3, after proving some preparatory lemmas and corollaries, we prove our main result on Hobby-McKenzie varieties, see Theorem \ref{main}, and, as a corollary, we give a better characterization of Hagemann-Mitschke varieties, see Corollary \ref{cor}. In Section 4, we summarize our results achieved in this article. 

\section{Preliminaries} 
In this section, we recall some basic algebraic notions related to varieties, clones, and interpretation of varieties. We introduce some further concepts of connectivity for digraphs. At the end of the section, we define some notions related to compatible digraphs, recall some well known facts, and verify statements which prove to be useful for our investigations in Section 3.

A term $t$ of a variety $\var V$ is called {\em idempotent} if $\var V$ satisfies the identity $t(x,\dots,x)=x$. The {\em  full idempotent reduct of a variety} $\var V$ is a variety  whose signature is the set of idempotent terms of $\var V$, and whose identities are those satisfied by the idempotent terms for $\var V$. Throughout the text, we denote the full idempotent reduct of $\var V$ by $\var V_{\Id}$. 

An identity in the language of a variety is {\em linear} if it has at most one occurrence of a function symbol on each side of the identity. It is well known that both Taylor, Hobby-McKenzie, and Hagemann-Mitschke varieties are characterized by the existence of a single $n$-ary idempotent term $t$ for some $n$ such that the variety satisfies a certain finite set of linear identities for $t$, see Theorems 5.1, 5.2, and 5.3 in \cite{GT}, Lemma 9.5 in \cite{HM}, and Theorem 4.2 in \cite{KKVW}. The corresponding term $t$ is respectively called  {\em a Taylor, a Hobby-McKenzie, and a Hagemann-Mitschke term}.  Now, it should be clear that a variety is respectively Taylor, Hobby-McKenzie, and Hagemann-Mitschke if and only if so is its idempotent reduct.

For later use, we define Hobby-McKenzie terms. An $n$-ary idempotent term $t$ of a variety $\var V$ is called a {\em Hobby-McKenzie term} if  for each $i\in\{1,\dots, n\}$, $\var V$ satisfies a linear identity in two variables $x$ and $y$ as of
$$t(x,\dots,x,x_{i+1},\dots, x_n)=t(y_1,\dots, y_{i-1},y,y_{i+1},\dots,y_n)$$ where $x_j,y_k\in\{x,y\}$ for all $i+1\leq j\leq n,$ and for all $\ k\neq i$ with $1\leq k\leq n$. In other words, $t$ is idempotent and obeys a set of linear identities that fails to hold for any $n$-ary term of $\mathcal{S}\mathcal{L}$.

Let $A$ be a set. A {\em clone on} $A$ is a set  of finitary operations on $A$ that contains all projection operations and is closed under composition. A {\em clone homomorphism} from a clone $\mathscr C$ to a clone $\mathscr D$ is a map from $\mathscr C$ to $\mathscr D$ that maps each projection of $\mathscr C$ to the corresponding projection of $\mathscr D$ and commutes with composition. Let $\alg A$ be an algebra.  The {\em clone of} $\alg A$ is the clone of finitary term operations of $\alg A$, and is denoted by $\clo(\alg A)$. 

 We define the notion of interpretability for varieties. Our notion of interpretability agrees with the one introduced in \cite{GT} by Garcia and Taylor. 
 A variety is called {\em trivial} if it consists of one-element algebras. The {\em clone of a trivial variety} is defined to be the clone of the one-element algebra. The {\em clone of a non-trivial variety} $\var V$ is the clone of the free algebra with countably infinite free generating set. 
 We denote this clone by $\clo(\var V)$. We say that {\em a variety $\var V$ interprets in a variety $\var W$} if there is a clone homomorphism from  $\clo(\var V)$ to $\clo(\var W)$. In the definition of interpretability, we may use the clone of any generating algebra of the varieties $\var V$ or $\var W$, since this clone is isomorphic to the clone of the respective variety.  We use this fact without any further note in the later proofs. We remark that interpretability of varieties can be expressed in terms of satisfaction for sets of identities. Thus the following is an equivalent description of interpretability: $\var V$ interprets in $\var W$ if and only if there is an arity preserving map $\alpha$ that sends the function symbols of $\var V$ to terms of $\var W$ such that for any set $\Sigma$ of identities of $\var V$, by replacing the function symbols  by their images under $\alpha$ in the identities of $\Sigma$, the resulting set of identities is satisfied by $\var W$. We call two varieties {\em equi-interpretable} if each of them interprets in the other. 

 For every digraph, we define four equivalences, each of which related to a certain kind of connectivity notion. These equivalences are proved to be useful when studying compatible digraphs in the three classes of varieties mentioned in the Introduction. Let $\rel G$ be a reflexive digraph.  Let $a$ and $b$ two vertices in $G$. An {$(a,b)$-path} is a sequence of vertices $a=a_0, a_1,\dots,a_n=b$ in $G$ such that $a_i\to a_{i+1}$ or  $a_{i+1}\to a_i$ for all $0\leq i<n$. A {\em directed $(a,b)$-path} is a sequence of vertices $a=a_0, a_1,\dots,a_n=b$ in $G$ such that $a_i\to a_{i+1}$  for all $0\leq i<n$. A {\em symmetric $(a,b)$-path} is a sequence of vertices $a=a_0, a_1,\dots,a_n=b$ in $G$ such that $a_i\to a_{i+1}$ and  $a_{i+1}\to a_i$ for all $0\leq i<n$. The {\em weak  equivalence} of $\rel G$ consists of all pairs $(a,b)\in G^2$ such that there is $(a,b)$-path in $\rel G$. The {\em strong equivalence} of $\rel G$ consists of  all pairs $(a,b)\in G^2$ such that there are directed $(a,b)$- and $(b,a)$-paths in $\rel G$. The {\em extreme equivalence} of $\rel G$ consists of  all pairs $(a,b)\in G^2$ such that there is a symmetric $(a,b)$-path in $\rel G$. The blocks are called the {\em weak}, the {\em strong}, and the {\em extreme components} of the respective equivalences. Now it is clear that a digraph $\rel G$ is weakly connected, strongly connected, and extremely connected if and only if the corresponding equivalence relation of $\rel G$ has a single block.

Let $\alpha$ be an equivalence on $\rel G$. Then $\rel G/\alpha$ denotes the digraph whose vertex set is $G/\alpha$ and edge set is defined by: $A\to B$ if and only if there is an edge $a\to b$ in $\rel G$ such that $a\in A$ and $b\in B$.
 We need one more notion of connectivity for digraphs. For a reflexive digraph $\rel G$, we define the {\em radical equivalence of}  $\rel G$ as follows. Let $\rel \nu_0$ denote the extreme equivalence of $\rel G$. Then  let $\nu'_1$ be the extreme equivalence of $\rel G/{\nu_0}$, and $\nu_1=\{(a,b)\colon a/\nu_0\ \nu'_1\ b/\nu_0\}.$ Clearly, $\nu_1$ is an equivalence on $G$, and $\rel G/\nu_1\cong (\rel G/\nu_0)/\nu'_1$ . By using $\nu_i$, we define $\nu_{i+1}$ similarly for each $i\geq 0$. Then clearly, $\nu_0\subseteq\dots\subseteq\nu_i\subseteq\dots$ are all equivalences on $ G$. So $\nu=\bigcup_{i=0}^\infty\nu_i$ is also an equivalence on  $ G$. We call $\nu$ the {\em radical equivalence} of $\rel G$. The blocks of $\nu$ are called {\em radical components}. Moreover, we say that $\rel G$ is {\em radically connected} if the radical equivalence of $\rel G$ has a single block. 
 
 \begin{proposition} Let $\rel G$ be a reflexive digraph. Then its radical equivalence $\nu$ is the smallest equivalence $\mu$ of $G$ such that $\rel G/\mu$ is antisymmetric.
\end{proposition}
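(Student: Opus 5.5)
We prove two things: $\rel G/\nu$ is antisymmetric, and $\nu\subseteq\mu$ for every equivalence $\mu$ with $\rel G/\mu$ antisymmetric. Antisymmetric here means there are no two distinct vertices $A\neq B$ with $A\to B$ and $B\to A$. Throughout we use one elementary fact: if $\alpha\subseteq\beta$ are equivalences, then $\rel G/\beta\cong(\rel G/\alpha)/(\beta/\alpha)$. This holds because an edge $A\to B$ in either quotient just means some $a\in A$, $b\in B$ with $a\to b$ in $\rel G$.

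\emph{Minimality.} Let $\mu$ be an equivalence with $\rel G/\mu$ antisymmetric. We show $\nu_i\subseteq\mu$ by induction on $i$.

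For $i=0$, take a symmetric path $a_0,\dots,a_n$ in $\rel G$. Each pair $a_j\leftrightarrow a_{j+1}$ gives edges both ways between $a_j/\mu$ and $a_{j+1}/\mu$ in $\rel G/\mu$. By antisymmetry these blocks coincide, so $a_0\,\mu\,a_n$ and hence $\nu_0\subseteq\mu$.

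For the inductive step, assume $\nu_i\subseteq\mu$. Then $\rel G/\mu\cong(\rel G/\nu_i)/(\mu/\nu_i)$. Apply the base-case argument to the digraph $\rel G/\nu_i$ and its equivalence $\mu/\nu_i$. The extreme equivalence $\nu_{i+1}'$ of $\rel G/\nu_i$ is then contained in $\mu/\nu_i$. Pulling back to $G$ gives $\nu_{i+1}\subseteq\mu$. Note that the recursion in the definition literally forms $\rel G/\nu_{i+1}\cong(\rel G/\nu_i)/\nu'_{i+1}$ and takes the extreme equivalence of each successive quotient, so the step is consistent with how $\nu_{i+1}$ is defined. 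Taking the union over $i$ gives $\nu\subseteq\mu$.

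\emph{Antisymmetry of $\rel G/\nu$.} Suppose $A\to B$ and $B\to A$ in $\rel G/\nu$. Pick edges $a\to b$ and $b'\to a'$ in $\rel G$ with $a,a'\in A$ and $b,b'\in B$. There are only finitely many such witnessing pairs, namely $(a,a')$ and $(b,b')$. Each lies in $\nu=\bigcup_i\nu_i$, and the $\nu_i$ form an increasing chain, so some single $k$ satisfies $a\,\nu_k\,a'$ and $b\,\nu_k\,b'$. Then in $\rel G/\nu_k$ the blocks $a/\nu_k$ and $b/\nu_k$ are joined by edges in both directions, a symmetric path of length one. So they are $\nu'_{k+1}$-related, giving $a\,\nu_{k+1}\,b$ and therefore $A=B$.

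The only point needing care is exactly this reduction to a finite stage. It works because antisymmetry is witnessed by finitely many pairs while $\nu$ is a directed union. No reflexivity beyond what the definitions already require is needed.
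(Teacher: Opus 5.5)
Your proof is correct. The minimality half is the same induction on $i$ as in the paper; you just reduce the inductive step to the base case via $\rel G/\mu\cong(\rel G/\nu_i)/(\mu/\nu_i)$, whereas the paper unpacks a symmetric path of $\rel G/\nu_{i-1}$ into elements of $\nu_{i-1}$-blocks and argues directly. Your finite-stage argument for the antisymmetry of $\rel G/\nu$ is also correct: both witnessing pairs lie in a common $\nu_k$, so the two blocks are already merged in $\nu_{k+1}$. It makes the proof self-contained, whereas the paper simply cites this fact from \cite{GMZ2}.
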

 \begin{proof}
 In \cite{GMZ2} we proved that the digraph $\rel G/\nu$ is antisymmetric. Here we argue that $\nu$ is the smallest equivalence $\mu$ of $G$ such that $\rel G/\mu$ is antisymmetric.

 So  let $\mu$ be an equivalence of $G$ such that $\rel G/\mu$ is antisymmetric. We prove by an induction on $i$ that $\nu_i\subseteq\mu$ for all $i$, and hence $\nu\subseteq \mu$. If  $\nu_0\not\subseteq \mu$, then there is an $(a,b)\in \nu_0\setminus\mu.$
 Therefore, there exists a symmetric path $a=a_0\leftrightarrow a_1\leftrightarrow\dots \leftrightarrow a_n=b$ in $\mu$ and  a $0\leq j<n$ such that $(a_j,a_{j+1})\not\in \mu$, but this would imply that $a_j/\mu\neq a_{j+1}/\mu$ and $a_j/\mu\leftrightarrow a_{j+1}/\mu$, so $\rel G/\mu$ would not be antisymmetric. 
 So $\nu_0\subseteq \mu$.  Suppose that $i\geq 1$, then by the induction hypothesis $\nu_{i-1}\subseteq \mu$. If  $\nu_i\not\subseteq \mu$, then there is an $(a,b)\in \nu_i\setminus\mu.$ Therefore, there exist blocks $B_0,\dots, B_n$ of $\nu_{i-1}$, elements $a_k, b_k,a_k', b_k'\in B_k$ where $0\leq k\leq n$, and a $0\leq j<n$ such that $a\in B_0$, $b\in B_n$,  $a_k\to b_{k+1} $, $a_k'\leftarrow b_{k+1}'$ for all $0\leq k<n,$ and $(a_j,a_{j+1})\not\in \mu$. Since each of the $B_k$ are included in a $\mu$-block, this would imply again that $a_j/\mu\neq a_{j+1}/\mu$ and $a_j/\mu\leftrightarrow a_{j+1}/\mu$, so $\rel G/\mu$ would not be antisymmetric. Thus $ \nu_i\subseteq \mu$.
\end{proof}

Notice that by the preceding proposition, the radical equivalence is a subrelation of the strong equivalence of $\rel G$. Indeed, the quotient of $\rel G$ by the the strong equivalence is obviously a cycle-free, hence antisymmetric reflexive digraph. 

So it should now be clear that for any reflexive digraph $\rel G$, the weak, the strong, the radical, and the extreme equivalences comprise a decreasing sequence with respect to inclusion, as we have mentioned this fact in the Introduction. We also note that for every compatible digraph $\rel G$ of an algebra $\alg G$, the four equivalences defined by the different types of connectivity notions above are congruences of $\alg G$.

 For a digraph $\rel G$, the {\em $n$-th power $\rel G^n$} of $\rel G$ is the digraph with vertex set $G^n$ where the edges are defined by 
$(a_1,\dots,a_n)\to (b_1,\dots,b_n)$  if and only if for all $1\leq i\leq n$, $a_i\to b_i$ in $\rel G$. An $n$-ary operation $f$ of $G$ is called a polymorphism of $\rel G$ if $f$ preserves the edge relation of $\rel G$, that is, for all $(a_1,\dots,a_n)\to (b_1,\dots,b_n)$ in $\rel G^n$, $f(a_1,\dots,a_n)\to f(b_1,\dots,b_n)$ in $\rel G$. The polymorphisms of a digraph $\rel G$ form a clone on the set $G$. We denote this clone by $\pol(\rel G)$. The idempotent operations in $\pol(\rel G)$ also comprise a clone on $G$. This latter clone is denoted by $\pol_{\Id}(\rel G)$. We remark, and later use this fact without any further notice, that $\rel G$ is a compatible digraph in $\var V$ if and only if there is a clone homomorphism from the clone of $\var V$ to $\pol(\rel G)$.

 We define two particular small digraphs which appear frequently later in the proofs. Let $\rel D$ be the reflexive digraph on the 3-element set $\{0,1,2\}$ with non-loop edges $0~\leftrightarrow~1$, $1\to 2$, and $2\to 0$.   Let $\rel K$ be the reflexive digraph given with non-loop edges $0\leftrightarrow 1$, $1\to 2$, $2\leftrightarrow 3$, and $3\to 0$ on the 4-element set $\{0,1,2,3\}$. We depicted these two digraphs 
 in Figure \ref{DN}.

 \begin{figure}[H] 
\centering
\includegraphics[scale=1]{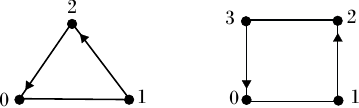}
\caption{The digraphs $\rel D$ and $\rel K$ (loop edges and arrows on double edges are not displayed).}
\label{DN}
\end{figure}

 In the following proposition, we establish some properties of $\rel D$ and $\rel K$.
 \begin{proposition}\label{DK} Let $\alg D$ and $\alg K$ denote the algebras defined respectively on $D$ and $K$ whose basic operations equal the idempotent polymorphisms of $\rel D$ and $\rel K$. Let $\var D$ and $\var K$ be the varieties respectively generated by $\alg D$ and $\alg K$. Then the following hold.
 \begin{enumerate}
 \item $\slt$ and $\var D$ are equi-interpretable. 
 \item $\set$ and $\var K$ are equi-interpretable.
\end{enumerate} 
 \end{proposition}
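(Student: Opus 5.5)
Both parts come down to producing clone homomorphisms between $\clo(\var D)=\pol_{\Id}(\rel D)$ (resp.\ $\pol_{\Id}(\rel K)$) and the clone of the $2$-element semilattice (resp.\ the clone of projections). The directions into $\var D$ and $\var K$ should be easy. For the converse directions I will use that an idempotent polymorphism clone preserves every relation primitively positively definable from the edge relation and singletons. Restricting to a suitable pp-definable subset with pp-definable relations on it then gives a clone homomorphism into the polymorphism clone of the restricted structure.

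\emph{$\slt$ interprets in $\var D$.} I will exhibit a semilattice operation among the idempotent polymorphisms of $\rel D$. The candidate is $s(x,y)=\min(x,y)$ for the order $0<1<2$. This is idempotent, commutative and associative, so it remains to check that it preserves edges.

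Suppose $(a,b)\to(c,d)$. If $\min(c,d)=0$, the image is $0$, and every vertex has an edge to $0$ except possibly $1$, which has one via $1\to 0$. If $\min(c,d)=2$, then $c=d=2$, so $a,b\in\{1,2\}$ and $\min(a,b)\to 2$. If $\min(c,d)=1$, then $c,d\in\{1,2\}$, so one of $a,b$ lies in $\{0,1\}$; hence $\min(a,b)\in\{0,1\}$, and it has an edge to $1$.

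Mapping the semilattice operation to $s$ therefore gives a clone homomorphism $\clo(\slt)\to\clo(\var D)$.

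\emph{$\var D$ interprets in $\slt$.} The set $\{1,2\}=\{y: y\to 2\}$ is pp-definable with constants. On it, the edge relation restricts to the order $1\le 2$. So every $f\in\pol_{\Id}(\rel D)$ restricts to an idempotent monotone operation of the $2$-element chain, and restriction is a clone homomorphism. Monotonicity alone only lands in the lattice clone, so I also need a pp-definable relation on $\{1,2\}$ that is preserved by one semilattice operation but not by the other. A Horn-type ternary relation of the form $\{(x,y,z): z\le x \text{ or } z\le y\}$, or its dual, would do. I expect to build it using the $3$-cycle $1\to2\to0\to1$ together with the symmetric edge $0\leftrightarrow1$, existentially quantifying a variable in $0/1$.

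Once such a relation is pp-defined, the idempotent monotone operations on $\{1,2\}$ preserving it form exactly the clone of a $2$-element semilattice. Restriction then yields the required homomorphism into $\clo(\slt)$. Finding and verifying this pp-definition, by a finite case check, is the step I expect to be the main obstacle.

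\emph{$\var K$ versus $\set$.} $\set$ interprets in every variety, since its clone consists of projections only. For the converse, I want a clone homomorphism $\pol_{\Id}(\rel K)\to$ projections. I plan to pp-define, with constants, a subset $U$ of $K$ and relations on $U$ whose idempotent polymorphisms are only projections. The natural target is a $2$-element set such as $\{1,3\}$ or $\{0,2\}$, with a relation like $\{(x,y): x\neq y\}$ or a $1$-in-$3$-type relation. The intended mechanism is the following. The extreme components $\{0,1\}$ and $\{2,3\}$ are joined only by the one-way edges $1\to2$ and $3\to0$. Composites such as $\{(x,y): \exists u\,(x\to u\to y)\}$, restricted to one vertex from each block, should then produce an "alternating" relation that swaps blocks.

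Restricting to $U$ gives the clone homomorphism. Again, the main work is the finite search and verification of this pp-definition; I would first try two-step and three-step edge composites restricted to $\{1,3\}$.
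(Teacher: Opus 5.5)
\textbf{There is a genuine gap: both converse directions are left as a pp-definition search you never carry out, and they carry all the content.} Your forward directions are fine and match the paper: $\min$ on $0<1<2$ is an idempotent polymorphism of $\rel D$, and $\set$ interprets in every variety.

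For (1), you do not settle which Horn relation is needed, and the first one you name cannot work. On $\{1,2\}$ the relation $\{(x,y,z)\colon z\le x \text{ or } z\le y\}$ is not preserved by $\min$: $\min$ sends its members $(2,1,2)$ and $(1,2,2)$ to $(1,1,2)$. So that relation is not pp-definable, and only the dual one, $\{1,2\}^3\setminus\{(2,2,1)\}$, can be. This part can be repaired along your lines:
\begin{itemize}
\item On $\{0,2\}=\{y\colon 2\to y\}$, the formula $\exists b\,\exists c\,(p\to b\wedge b\to q\wedge r\to c\wedge c\to b\wedge b\to c)$ defines exactly $\{0,2\}^3\setminus\{(2,2,0)\}$.
\item An idempotent operation $f$ on $\{0,2\}$ preserving this relation is monotone and satisfies $f(x)\wedge f(y)\le f(x\wedge y)$.
\item Hence $f^{-1}(2)$ is a filter, so $f$ is a nonempty meet.
\end{itemize}
That last step is exactly the paper's ``major subsets form a filter'' argument. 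The paper runs it directly on an arbitrary idempotent polymorphism $t$, with no pp-definitions at all.

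For (2) the gap is more serious. The paper does no search here; it cites \cite{LLP}, which says surjective polymorphisms of $\rel K$ are essentially unary. Your proposed ingredients do not work:
\begin{itemize}
\item The relation $\neq$ on a two-element set is preserved by the idempotent majority and minority operations, so it cannot force projections.
\item The composites you plan to try are useless. On $\{1,3\}$, and likewise on $\{0,2\}$, the edge relation of $\rel K$ restricts to equality, while $\exists u\,(x\to u\to y)$ restricts to the full relation.
\end{itemize}
As written, nothing shows that $\var K$ interprets in $\set$.

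Your strategy can be completed without \cite{LLP}, as follows.
\begin{itemize}
\item On $\{y\colon 1\to y\}=\{0,1,2\}$, the relation $\exists u\,(3\to u\wedge x\to u\wedge u\to y)$ is exactly the edge set of $\rel D$.
\item The automorphism $i\mapsto i+2$ (mod $4$) of $\rel K$ gives a second copy of $\rel D$ on $\{2,3,0\}$, in which the roles of $0$ and $2$ are exchanged.
\item By the analysis for (1), every idempotent polymorphism of $\rel K$ restricted to $\{0,2\}$ is a nonempty meet both for $0<2$ and for $2<0$, hence a projection.
\item The set $\{0,2\}=\{y\colon 1\to y\wedge 3\to y\}$ is pp-definable, so restriction to it is the required clone homomorphism.
\end{itemize}
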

 \begin{proof}
 First we prove item (1). Observe that the meet-semilattice operation of the chain $0<1<2$ is a polymorphism of $\rel D$. So $\slt$ interprets in $\var D$.

 For the other direction, observe first that $\{0,2\}$ is closed under the operations of $\alg D$. Indeed, for any $c_1,\dots,c_n\in \{0,2\}$ and $n$-ary idempotent polymorphism $t$ of $\rel D$, $2=t(2,\dots,2)\to t(c_1,\dots,c_n)$. Hence $t(c_1,\dots,c_n)\in \{0,2\}$. Let $\alg D_0$ denote the subalgebra of $\alg D$ with underlying set $\{0,2\}$. Let $\var D_0$ be the variety generated by $\alg D_0$. Clearly, $\var D$ interprets in $\var D_0$. If we prove that 
 $\var D_0$ interprets in $\slt$, we are done.

For the proof of this, it suffices to verify that any term operation of $\alg D_0$ is a meet-semilattice operation of the chain $0<2$. Note that every term operation of $\alg D_0$ is of the form $t|_{\{0,2\}}$ where $t\in\pol_{\Id}(\rel D)$. Our initial goal is to describe the polymorphisms of $\pol_{\Id}(\rel D)$. So let $t$ be an $n$-ary idempotent polymorphism of $\rel D$. 
 

 We call $I\subseteq\{1,\dots,n\}$ {\em a major subset for $t$} if there exist $c_1,\dots,c_n\in D$ such that  \[t(c_1,\dots,c_n)=2\text{ and }
 I=\{i\mid c_i=2\}.
 \]
 By idempotency, $\{1,\dots,n\}$ is itself a major subset. We prove two claims on the major subsets.

 \begin{cla1}
 \label{c:2_iff_2_on_major} Let $d_1,\dots,d_n\in D$ and $I=\{i\colon d_i=2,\ 1\leq i\leq n\}$.
  Then   $t(d_1,\dots,d_n)=2$ if and only if $I$ is a major subset in $\{1,\dots,n\}$.
 \end{cla1}
 \begin{claimproof}
     The only if part is immediate from the definition of major subsets. For the if part, suppose that $I$ is a major subset, then there are $c_1,\dots,c_n\in D$ such that $$t(c_1,\dots,c_n)=2\text{ and } I=\{i\colon c_i=2, 1\leq i\leq n\}.$$ Notice that by the second equality,  $c_i\leftrightarrow d_i$ holds for all $1\leq i\leq n$. Therefore, as $t$ is a polymorphism, $t(c_1,\dots,c_n)\leftrightarrow t(d_1,\dots,d_n)$, which can only happen if $t(d_1,\dots,d_n)=2$. So the claim is proved.
 \end{claimproof}

 \begin{cla2}
 \label{c:majors_form_a_filter}
     The major subsets form a filter of the lattice $\alg {B}(n)$ of subsets of $\{1,\dots,n\}$.
 \end{cla2}
 \begin{claimproof}
     First we prove that if $I$ is a major subset, and $j\in\{1,\dots, n\}\setminus I$ then $I\cup\{j\}$ is also a major subset. Let $$c=(c_1,\dots,c_n)\in D^n \text{ such that } t(c)=2 \text{ and }
     I=\{i\mid c_i=2\}.$$ Then $c_j\in\{0,1\}$. Let $c'_j\in\{0,1\}$ so that $c_j\neq c'_j$, and let $$c'= (c_1,\dots,c'_j,\dots,c_n).$$
     Since $c_j\leftrightarrow c_j'$ and $t$ is a polymorphism, $2=t(c)\leftrightarrow t(c')$ and hence $t(c')=2$. Then by \[
     t(c_1,\dots,1,\dots,c_n)\to t(c_1,\dots,2,\dots,c_n)\to t(c_1,\dots,0,\dots,c_n),
     \] where $0, 1$ and $2$ are the $j$-th entries of the respective $n$-tuples, 
     \[
     2\to t(c_1,\dots,2,\dots,c_n)\to 2.
     \]
     Therefore, $t(c_1,\dots,2,\dots,c_n)=2$. So $I\cup\{j\}$ is a major subset. This yields that the major subsets form an upwardly closed subset of $\alg{B}(n)$.

     We still have to see that the major subsets are closed under intersection. Suppose that $I_1$ and $I_2$ are major subsets for $t$. Let \[
     e_i:=\begin{cases}
         2,\,\text{if }i\in I_1\cap I_2,\\
         1,\,\text{if }i\in I_1\backslash I_2,\\
         0,\,\text{otherwise}
     \end{cases}
     \] while \[
     e^+_i:=\begin{cases}
         2,\,\text{if }i\in I_1,\\
         0,\,\text{otherwise}
     \end{cases}
     \] and \[
     e^-_i:=\begin{cases}
         2,\,\text{if }i\in I_2,\\
         0,\,\text{otherwise.}
     \end{cases}
     \] Then $e^-_i\rightarrow e_i\rightarrow e^+_i$ for all $i$. By using Claim 1 and the fact that $t$ is a polymorphism, \[
     2=t(e^-_1,\dots,e^-_n)\rightarrow t(e_1,\dots,e_n)\rightarrow t(e^+_1,\dots,e^+_n)=2.
     \] Hence $t(e_1,\dots,e_n)=2$, and so $I_1\cap I_2$ is indeed a major subset for $t$.
 \end{claimproof}

 By Claim 2, there is a smallest major subset $I_t$ in $\{1,\dots,n\}$ for $t$. Notice that by Claims 1 and 2, for arbitrary $a_1,\dots,a_n\in D$, $t(a_1,\dots,a_n)=2$ holds if and only if $a_i=2$ for all $i\in I_t$. In particular, since $t$ is idempotent, $I_t\neq \emptyset$. So  for arbitrary $a_1,\dots,a_n\in \{0,2\}$, $t_{\{0,2\}}(a_1,\dots,a_n)=2$ holds if and only if $a_i=2$ for all $i\in I_t$. This means precisely that
$$t_{\{0,2\}}(x_1\dots,x_n)= \bigwedge_{i\in I_t} x_i.$$ Thus any term operation of $\alg D_0$ is a meet-semilattice operation of the chain $0<2$, as we claimed.
 
For item (2), it suffices to show that $\var K$ interprets in $\set$, since $\set$ interprets in every variety. The main result of \cite{LLP} applied to $\rel K$ asserts that all surjective polymorphisms of $\rel K$ are essentially unary, so the idempotent polymorphisms of $\rel K$ coincide with the projections. Thus $\var K$ interprets in $\set$.
\end{proof}

We define a natural construction of compatible digraphs. Let $\rel P$ be a digraph. Let $\alg A$ be an algebra such that $P\subseteq A$ and $P$  is a generating set of $\alg A$. We call the digraph $\rel A$ the {\em $\rel P$-generated digraph of $\alg A$}  if  the vertex set of  $\rel A$ coincides with $ A$, and the edge set of $\rel A$ equals the subalgebra generated by the edge set of $\rel P$ in $\alg A^2$.
Note that if $\rel A$ is the $\rel P$-generated digraph of $\alg A$, then $\rel A$ is a compatible digraph of $\alg A$. Let $\var V$ be a variety. Let $\alg F\in \var V$ be the free algebra with free generating  set $P$.  Sometimes, we call the $\rel P$-generated digraph of $\alg F$ the {\em compatible digraph freely generated by $\rel P$ in $\mathcal{V}$}. 

We say that a digraph $\rel H$ is {\em retract} of a digraph $\rel G$ if there exist   homomorphisms $\alpha\colon\rel G\to\rel H$ and $\beta\colon\rel H\to\rel G$  such that  $\alpha\beta=\id_H$. Then we call $\alpha$ a {\em retraction} and $\beta$ a  {\em coretraction}. We require the following lemma on freely generated digraphs.

\begin{lemma}[\cite{BGMZ1}, cf. Lemma 3.3] \label{freedigraph}
Let $\var V$ be a non-trivial variety. Let
$\rel S$ be a reflexive weakly connected digraph with vertex set $S=\{s_1,\dots,s_n\}$ such that  there exists a clone homomorphism from the clone of $\var V_{\Id}$ to $\pol(\rel S)$. Let $\alg F\in\var V$ denote the free algebra with free generating set $S$, and $\rel F$  the digraph freely generated by $\rel S$ in $\var V$. Let $U$ denote the set of unary term operations of $\alg F$, and for any $u\in U$, let $\rel F_u$ be the subdigraph of $\rel F$, induced by the subset
$$\{t(s_1,\dots,s_n)\colon t \text{ is an $n$-ary term where }\forall s\in S,\ t(s,\dots, s)=u(s)\}.$$ Then the following hold.
 
\begin{enumerate}
\item  The weak components of $\rel F$ coincide with the subdigraphs $\rel F_u$, $u\in U$.
\item  For the identity operation $\id$ of $F$,  $\rel F_{\id}$ is the weak component induced by the  idempotent term operations in  $\rel F$, and $\rel S$ is a retract of $\rel F_{\id}$ with the coretraction $\beta \colon S\to  F_{\id},\ a\mapsto a.$
\end{enumerate}
\end{lemma}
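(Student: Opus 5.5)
The plan is to work directly with the description of $\alg F$ as the algebra of $n$-ary terms evaluated at the free generators $s_1,\dots,s_n$, and to make the edge set of $\rel F$ explicit. The subalgebra of $\alg F^2$ generated by the edges of $\rel S$ consists exactly of the pairs $(r(a_1,\dots,a_m),r(b_1,\dots,b_m))$, where $r$ is an $m$-ary term and $a_k\to b_k$ in $\rel S$ for all $k$. Since $\var V$ is non-trivial, the generators $s_1,\dots,s_n$ are pairwise distinct in $F$. Two terms $t,t'$ with $t(s_1,\dots,s_n)=t'(s_1,\dots,s_n)$ then satisfy $t=t'$ in $\var V$. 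In particular $t(x,\dots,x)=t'(x,\dots,x)$, so each element $f=t(s_1,\dots,s_n)$ determines a unique unary term operation $u(x)=t(x,\dots,x)$. Hence the sets $F_u$, $u\in U$, partition $F$.

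For item (1), I first show that no edge of $\rel F$ joins two different $F_u$'s. Suppose $p\to q$ with $p=r(a_1,\dots,a_m)$ and $q=r(b_1,\dots,b_m)$, where $a_k,b_k\in S$. Identifying all generators with $x$ gives $p(x,\dots,x)=r(x,\dots,x)=q(x,\dots,x)$, so $p$ and $q$ lie in the same $F_u$. Next I show that each $F_u$ is weakly connected. Fix $f=t(s_1,\dots,s_n)\in F_u$. Because $\rel S$ is reflexive and weakly connected, for every $i$ there is an oriented path from $s_i$ to $s_1$ in $\rel S$. Change one coordinate of $t$ at a time along such a path, keeping the other coordinates fixed via loops. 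Each step is then an edge of $\rel F$ of the form $t(\dots,c,\dots)\to t(\dots,c',\dots)$ or its reverse. This yields an oriented path from $f$ to $t(s_1,\dots,s_1)=u(s_1)$ inside $\rel F$, and by the previous observation this path stays in $F_u$. So every $F_u$ is weakly connected, and together with the first observation the $F_u$ are precisely the weak components.

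For item (2), note that $F_{\id}$ consists of the elements $t(s_1,\dots,s_n)$ with $t$ idempotent, and it contains every $s_i$ (take $t$ to be a projection). Let $\varphi$ be the given clone homomorphism from the clone of $\var V_{\Id}$ to $\pol(\rel S)$. Define
\[
\alpha\colon F_{\id}\to S,\qquad \alpha\bigl(t(s_1,\dots,s_n)\bigr)=\varphi(t)(s_1,\dots,s_n).
\]
This is well defined by the uniqueness of $t$ modulo the identities of $\var V$, noted in the first paragraph. The identity $\alpha\beta=\id_S$ holds because $\varphi$ sends projections to projections. The map $\beta$ is a homomorphism because the edges of $\rel S$ are edges of $\rel F$.

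The main step, and the only place needing care, is that $\alpha$ is a homomorphism. Take an edge $p\to q$ inside $F_{\id}$, written as $p=r(a_1,\dots,a_m)$ and $q=r(b_1,\dots,b_m)$ with $a_k\to b_k$ in $\rel S$. Then $r(x,\dots,x)=p(x,\dots,x)=x$, so $r$ itself is idempotent and lies in the domain of $\varphi$. The term $t$ representing $p$ is $r$ composed with projections onto the indices of the $a_k$, and similarly for $q$. Since $\varphi$ commutes with composition and preserves projections, $\alpha(p)=\varphi(r)(a_1,\dots,a_m)$ and $\alpha(q)=\varphi(r)(b_1,\dots,b_m)$. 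Because $\varphi(r)$ is a polymorphism of $\rel S$, we get $\alpha(p)\to\alpha(q)$. Thus $\rel S$ is a retract of $\rel F_{\id}$ with coretraction $\beta$.
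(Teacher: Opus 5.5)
The paper gives no proof of this lemma: it is quoted from \cite{BGMZ1} (Lemma 3.3 there), so there is no in-text argument to compare your proposal against. Your proposal is a correct and complete self-contained proof.

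For item (1), the pieces fit together as needed. You describe the edges of $\rel F$ as the pairs $(r(a_1,\dots,a_m),r(b_1,\dots,b_m))$ with $a_k\to b_k$ in $\rel S$. You observe that both ends of such an edge have the same diagonal $r(x,\dots,x)$. You then walk coordinate by coordinate along oriented paths of $\rel S$, holding the other coordinates fixed by loops. None of this uses the clone homomorphism, which is right.

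For item (2), the essential point is that for an edge $p\to q$ inside $\rel F_{\id}$ the generating term $r$ is itself idempotent, and you handle this correctly. Hence $\varphi(r)$ is defined. Since $\varphi$ preserves projections and composition, $\alpha(p)\to\alpha(q)$ reduces to $\varphi(r)$ being a polymorphism of $\rel S$.

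Two points are loose but harmless:
\begin{enumerate}
\item You write $p(x,\dots,x)$ for an element $p\in F$, meaning the diagonal of any $n$-ary term representing $p$. This is legitimate by your earlier remark that all representing terms are equal in $\var V$.
\item You tacitly identify $\clo(\var V_{\Id})$ with the idempotent terms of $\var V$ modulo the identities of $\var V$. This is exactly how the full idempotent reduct is defined, and it is where non-triviality of $\var V$ enters.
\end{enumerate}
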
 

The following corollary makes connection between the Hobby-McKenzie varieties and the digraph $\rel D$.
\begin{corollary}\label{Dretract}
    Let $\mathcal{V}$ be a variety, $\rel F$  the compatible digraph freely generated by $\rel D$ in $\mathcal{V}$, and $\rel F_{\id}$ the weak component that includes $D$ in $\rel F$.  Then the following are equivalent.
  \begin{enumerate}
      \item $\mathcal{V}$ is a Hobby-McKenzie variety.
      \item $\rel D$ is not a retract of $\rel F_{\id}$.
  \end{enumerate}
\end{corollary}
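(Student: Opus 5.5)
We prove both directions by contraposition, using Proposition \ref{DK}(1), which says that $\slt$ and $\var D$ are equi-interpretable, together with Lemma \ref{freedigraph}. Recall that $\var V$ is Hobby-McKenzie if and only if $\var V_{\Id}$ does not interpret in $\slt$. By Proposition \ref{DK}(1) this holds if and only if $\var V_{\Id}$ does not interpret in $\var D$. Since $\clo(\var D)\cong\clo(\alg D)$, whose basic operations are exactly the idempotent polymorphisms of $\rel D$, the latter holds if and only if there is no clone homomorphism from $\clo(\var V_{\Id})$ to $\pol_{\Id}(\rel D)$. Any clone homomorphism from $\clo(\var V_{\Id})$ to $\pol(\rel D)$ sends idempotent terms to idempotent operations, so it lands in $\pol_{\Id}(\rel D)$. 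Hence the statement reduces to the following: $\clo(\var V_{\Id})\to\pol(\rel D)$ exists if and only if $\rel D$ is a retract of $\rel F_{\id}$. We may assume $\var V$ is nontrivial; if it is trivial, it is Hobby-McKenzie, $\rel F_{\id}$ is a single loop, and $\rel D$ is not its retract.

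\emph{(2)$\Rightarrow$(1), i.e.\ not Hobby-McKenzie implies retract.} If $\var V$ is not Hobby-McKenzie, there is a clone homomorphism from $\clo(\var V_{\Id})$ to $\pol(\rel D)$. The digraph $\rel D$ is reflexive and weakly connected, so Lemma \ref{freedigraph}(2) applies directly and shows that $\rel D$ is a retract of $\rel F_{\id}$.

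\emph{(1)$\Rightarrow$(2), i.e.\ retract implies not Hobby-McKenzie.} Suppose $\alpha\colon \rel F_{\id}\to\rel D$ is a retraction whose coretraction $\beta$ maps into $F_{\id}$; composing, we may take $\beta$ to be any homomorphism with $\alpha\beta=\id_D$. Write $D=\{s_1,s_2,s_3\}$. By Lemma \ref{freedigraph}, $F_{\id}$ consists of the elements $t(s_1,s_2,s_3)$ with $t$ a ternary idempotent term, and it is closed under idempotent term operations. For an $m$-ary idempotent term $f$ of $\var V$, define
\[
\hat f(d_1,\dots,d_m)=\alpha\bigl(f^{\alg F}(\beta d_1,\dots,\beta d_m)\bigr).
\]
This is a polymorphism of $\rel D$: $f^{\alg F}$ preserves the edge relation of $\rel F$, since that relation is a subalgebra of $\alg F^2$; it also keeps $F_{\id}$ inside $F_{\id}$, and $\alpha,\beta$ are homomorphisms. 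Moreover $\hat f$ is idempotent, because $\alpha\beta=\id$.

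The main obstacle is that $f\mapsto\hat f$ need not commute with composition, because $\beta\alpha\neq\id$. To get around this, we will not use an arbitrary retraction but exploit freeness instead. An element of $F_{\id}$ is $t(s_1,s_2,s_3)$, and we evaluate $f(t_1,\dots,t_m)$ as the ternary term $f(t_1,\dots,t_m)(s_1,s_2,s_3)$. So we set $\Phi(t)=\alpha(t^{\alg F}(s_1,s_2,s_3))$ for ternary idempotent terms $t$, and try to build, from the single homomorphism $\alpha$, a clone homomorphism into $\pol(\rel D^{D^3})$ or into $\pol(\rel D)$. Concretely, we would let $\alpha$ restricted to $F_{\id}$ define an element of the free algebra's dual, and then use the fact that $F_{\id}$ is the ternary part of the clone. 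The map $\alpha$ then yields a homomorphism from the $\rel D$-generated compatible digraph to $\rel D$ fixing $D$, and this should give a homomorphism of $\clo(\var V_{\Id})$ into $\pol(\rel D)$ via the standard "free structure as polymorphism clone" argument, i.e.\ the converse direction of Lemma \ref{freedigraph}.

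If this direct route resists, the fallback is the standard characterization: a clone homomorphism $\clo(\var V_{\Id})\to\pol(\rel D)$ exists if and only if $\rel D$ is a retract of the free structure. We would prove this by checking only the linear two-variable identities in the definition of a Hobby-McKenzie term. Such identities are preserved by the map $f\mapsto\hat f$ when evaluated on elements of $\beta(\{0,2\})$, because of the explicit description of $\pol_{\Id}(\rel D)$ on $\{0,2\}$ as meets obtained in the proof of Proposition \ref{DK}. Each $\hat f$ restricted to $\{0,2\}$ is a meet $\bigwedge_{i\in I}x_i$, and a Hobby-McKenzie identity for $t$ forces, by applying $\alpha$ to both sides evaluated at the pair $\beta(0),\beta(2)$, a contradiction with the meet description of $\hat t$. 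This linear-identity check is the step I expect to require the most care.
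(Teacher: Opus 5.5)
Your direction (2)$\Rightarrow$(1) is the paper's argument. Failure of the Hobby-McKenzie property gives an interpretation of $\var V_{\Id}$ in $\slt$, hence in $\var D$ by Proposition \ref{DK}(1), hence a clone homomorphism into $\pol(\rel D)$. Lemma \ref{freedigraph} then gives the retraction.

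For (1)$\Rightarrow$(2), drop the ``direct route.'' A retraction only yields the map $f\mapsto\hat f$, which commutes with minors (permuting and identifying variables) but not with composition. In general such a height-one map does not upgrade to a clone homomorphism. There is no ``converse of Lemma \ref{freedigraph}'' to appeal to: the standard free-structure characterization concerns minion homomorphisms, not clone homomorphisms.

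You never need a clone homomorphism here, so the composition obstacle you single out does not arise. The Hobby-McKenzie condition is a set of linear identities for a single term $t$, with one occurrence of $t$ on each side. Evaluate such an identity of $t^{\alg F}$ at $\beta a,\beta b$ and apply $\alpha$. This shows that $\hat t$ satisfies the same identities, on all of $D$ and not just on $\{0,2\}$, and $\hat t$ is an idempotent polymorphism of $\rel D$. Your fallback then finishes correctly. By the proof of Proposition \ref{DK}, $\hat t|_{\{0,2\}}=\bigwedge_{i\in I}x_i$ with $I\neq\emptyset$, and the identity with index $i=\max I$ evaluated at $x=2$, $y=0$ gives $2=0$.

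This is the paper's proof (``linear identities are preserved under retract''). The only difference is that the paper concludes by noting $\var D$ would be Hobby-McKenzie while interpreting in $\slt$, whereas you use the meet description directly.

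What this argument shows is that a retraction rules out a Hobby-McKenzie term, which by the term characterization is what (1)$\Rightarrow$(2) requires. It does not show that a retraction produces a clone homomorphism. Your opening reduction to an ``if and only if'' about clone homomorphisms is therefore more than you prove or need.
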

\begin{proof}
First we prove $(1) \Rightarrow (2)$.  If (1) holds, $\rel F$ has a Hobby-McKenzie polymorphism. This being an idempotent polymorphism restricts to $\rel F_{\id}$, so $\rel F_{\id}$ also has a Hobby-McKenzie polymorphism. Since linear identities are preserved under retract, 
if $\rel D$ would be a retract of $\rel F_{\id}$, then $\rel D$ would also have a Hobby-McKenzie polymorphism, so  $\var D$  would be a Hobby-McKenzie variety. On the other hand, $\var D$ is an idempotent variety, so $\var D$ and $\var D_{\Id}$ are equi-interpretable. So by item (1) in Proposition \ref{DK}, $\var D_{\Id}$ interprets in $\slt$. By using the characterization of Hobby-McKenzie varieties mentioned in the Introduction, this means that $\var D$  is in fact not a Hobby-McKenzie variety  This contradiction finishes the proof of  $(1) \Rightarrow (2)$.

For $(2) \Rightarrow (1)$, let us suppose that $\var V$ is not a Hobby-McKenzie variety. Then,  by using the characterization from the Introduction, $\mathcal{V_{\Id}}$ interprets in $\slt$. Hence, by the use of item (1) in Proposition \ref{DK} again, $\mathcal{V_{\Id}}$ interprets in $\var D$. So there is a clone homomorphism from the clone $\mathcal{V_{\Id}}$ to $\pol (\rel D)$. Then by the preceding lemma, $\rel D$ is a retract of $\rel F_{\id}$.
\end{proof}

\section{Main results} 

In this section, we give several new characterizations of Hobby-McKenzie varieties. As a corollary, we obtain a characterization of Hagemann-Mitschke varieties which yields a better insight into the structure of these varieties than the characterizations already known. We start with proving some lemmas that give us information on the shape of certain kinds of compatible digraphs in Hobby-McKenzie varieties. 

First we establish a property of the digraph freely generated by $\rel D$ in an idempotent Hobby-McKenzie variety.

\begin{lemma}\label{l:d_is_a_retract}
Let $\mathcal{V}$ be an idempotent Hobby-McKenzie variety, and $\rel F$  the compatible digraph freely generated by $\rel D$ in $\mathcal{V}$. Then $\rel F$ is extremely connected.    
\end{lemma}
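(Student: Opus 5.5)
The plan is to reduce the statement to one relation between generators, and then obtain that relation by contradiction with Corollary \ref{Dretract}.

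\textbf{Reduction to the generators.} Let $\varepsilon$ be the extreme equivalence of $\rel F$. As noted in Section 2, $\varepsilon$ is a congruence of $\alg F$, the free algebra of $\var V$ on $\{0,1,2\}$. Since $\var V$ is idempotent, every element of $F$ has the form $t(0,1,2)$ for an idempotent ternary term $t$. So if $0,1,2$ lie in one $\varepsilon$-block, then compatibility gives $t(0,1,2)\mathrel{\varepsilon} t(0,0,0)=0$ for every $t$, and $\rel F$ is extremely connected. The symmetric edge $0\leftrightarrow1$ of $\rel D$ is an edge of $\rel F$, so $0\mathrel{\varepsilon}1$. It therefore remains to show $0\mathrel{\varepsilon}2$.

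\textbf{Setting up the contradiction.} Suppose that $0$ and $2$ are not $\varepsilon$-related. By Lemma \ref{freedigraph} (with $\rel S=\rel D$), $\rel F=\rel F_{\id}$, because the only unary term of an idempotent variety is the identity. By Corollary \ref{Dretract}, $\rel D$ is not a retract of $\rel F$. I aim to contradict this by building a retraction $\alpha\colon\rel F\to\rel D$ that fixes $0,1,2$.

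\textbf{Constructing the retraction.} I imitate the major-subset analysis in the proof of Proposition \ref{DK}.

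1. Put $\alpha(x)=2$ exactly when $x\in 2/\varepsilon$.
2. For $x\notin 2/\varepsilon$, choose $\alpha(x)\in\{0,1\}$ according to whether $x$ is reachable by a directed path from $2/\varepsilon$ without entering $1/\varepsilon$ (then $\alpha(x)=0$), or not (then $\alpha(x)=1$).
3. Check that $\alpha$ is a homomorphism.
   - Edges inside $\{0,1\}$ are free, since $0\leftrightarrow1$ is symmetric in $\rel D$.
   - Edges $2\to0$ and $1\to2$ are allowed.
   - The forbidden edges are $0\to2$, and $2\to1$ between distinct blocks.

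Excluding the forbidden edges is where the hypothesis that $\var V$ has a Hobby-McKenzie term $t$ is needed. Take an edge $p\to q$ of $\rel F$ and write it as $s(\bar a)\to s(\bar b)$, where $s$ is a term and each column $(a_j,b_j)$ is an edge of $\rel D$. To show, say, that $p\notin 2/\varepsilon$ and $q\in 2/\varepsilon$ forces $\alpha(p)=1$, I would transfer the HM linear identities
\[
t(x,\dots,x,x_{i+1},\dots,x_n)=t(y_1,\dots,y_{i-1},y,y_{i+1},\dots,y_n)
\]
into symmetric paths in $\rel F$. The tool is the chain
\[
u_i=t(\underbrace{2,\dots,2}_{i},0,\dots,0),\qquad u_0=0,\quad u_n=2 .
\]
Consecutive elements $u_{i-1},u_i$ are linked through
\[
t(\dots,0,\dots)\leftrightarrow t(\dots,1,\dots)\to t(\dots,2,\dots)\to t(\dots,0,\dots),
\]
and the $i$-th identity should let me replace the one-directional steps by symmetric ones. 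Substituting $x,y\in\{0,2\}$ and using $0\leftrightarrow1$ coordinatewise, I would try to show inductively that $u_{i-1}\mathrel{\varepsilon}u_i$. That would directly give $0\mathrel{\varepsilon}2$ and make the retraction unnecessary.

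\textbf{Main obstacle.} The hard step is this last one: turning each HM identity into an actual symmetric path between $u_{i-1}$ and $u_i$. If the direct chain argument fails, I would fall back on the retraction $\alpha$ and verify its edge-preservation using the filter-like behaviour of the sets $\{i: a_i=2\}$ from Claims 1 and 2, now taken modulo $\varepsilon$.
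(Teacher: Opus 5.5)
\textbf{There is a genuine gap: the retraction $\alpha$ is never actually produced, and the map you propose fails as stated.} Your reduction to $0\mathrel{\varepsilon}2$ matches the paper's, and so does your overall strategy: assume $(0,2)\notin\varepsilon$, build a retraction of $\rel F$ onto $\rel D$, and contradict Corollary \ref{Dretract}.

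The problem with your $\alpha$ is concrete. Since $0\mathrel{\varepsilon}1$, the vertex $0$ lies in $1/\varepsilon$, so it cannot be reached from $T=2/\varepsilon$ ``without entering $1/\varepsilon$''. Your rule therefore sets $\alpha\equiv 1$ on all of $1/\varepsilon$. In particular $\alpha(0)=1$, and the edge $2\to 0$ of $\rel F$ is sent to the non-edge $2\to 1$ of $\rel D$.

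Suppose you instead let the last vertex of the path lie in $1/\varepsilon$. Then you must show that no vertex reachable from $T$ by such a path has an out-neighbour in $T$, since otherwise $\alpha$ produces an edge $0\to 2$. That is a statement about arbitrarily long directed paths out of $T$. It is much stronger than what is needed, and you give no way to prove it.

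The direct chain $u_0,\dots,u_n$ also has no mechanism. The displayed link is just the image of $\rel D$ under $x\mapsto t(2,\dots,2,x,0,\dots,0)$. Showing that such images fall into a single extreme component is exactly the content of the lemma, so ``the $i$-th identity should let me replace the one-directional steps'' is not an argument.

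\textbf{The missing idea is a local closure property of $T$.} If $w^-\to w\to w^+$ with $w^-,w^+\in T$, then $w\in T$. This needs only idempotency and freeness, not the Hobby--McKenzie term, which enters solely through Corollary \ref{Dretract}. The paper proves it as follows.
\begin{enumerate}
\item Write an edge of $\rel F$ as $s(0,1,2,0,1,1,2)\to s(0,1,2,1,0,2,0)$ for a 7-ary term $s$, and pass to polynomials $p(x)=t(0,1,2,x)$.
\item Prove that $p(0)\in T$ implies $p(2)\in T$. This uses $p(0)\to t(0,0,t(1,1,2,1),t(1,1,2,1))\to p(0)$ together with the symmetric edge between this middle vertex and $t(0,1,p(0),p(0))$.
\item Deduce that a vertex with an edge into $T$ has the form $p(1)$, and a vertex receiving an edge from $T$ has the form $p(0)$, in both cases with $p(2)\in T$.
\item Combine the two representations by the same idempotency trick.
\end{enumerate}

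With this in hand, let $R$ be the set of vertices outside $T$ having an out-neighbour in $T$. The map sending $T\mapsto 2$, $R\mapsto 1$ and everything else to $0$ is a retraction. Edges from the third set into $T$ are excluded by the definition of $R$, and edges from $T$ into $R$ are excluded by the closure property. The edges $1\to 2$ and $2\to 0$, together with the closure property, give $\alpha(1)=1$ and $\alpha(0)=0$.

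Your fallback via ``major subsets modulo $\varepsilon$'' would need exactly this property as well. In Claim 2 of Proposition \ref{DK}, both upward closure and closure under intersection rest on the step ``$2\to w\to 2$ forces $w=2$''. That step is trivial in $\rel D$ but is the whole difficulty in $\rel F$.
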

\begin{proof}  Let $\mathcal{V}$ be an idempotent variety, and assume that $\rel F$ is not extremely connected. We shall prove that $\rel D$ is a retract of $\rel F$. By Corollary \ref{Dretract}, this means that $\var V$ is not a Hobby-McKenzie variety. 

We denote  the extreme equivalence on $\rel F$ by $\sim$. Let $\alg F$ be the free algebra with free generating set $\{0,1,2\}$ in $\var V$. As we have mentioned, $\sim$ is a congruence of $\alg F$, so the term operations of $\alg F$ preserve $\sim$. Let $\rel T$ denote the extreme component of 2 in $\rel F$. The proof  that $\rel D$ is a retract of $\rel F$ goes through a series of claims.
\begin{cla1}
\label{c:0_not_sim_2}
    $0,1\not\in T$.
\end{cla1}
\begin{claimproof}
    Suppose that $0\sim 2$. Then the elements of $D$ are in the same $\sim$-class. Each element of $F$ is of the form $t(0,1,2)$ for some ternary term $t$ of $\mathcal{V}$. As $\sim$ is a congruence of $\alg F$, $t(0,1,2)\sim t(0,0,0)=0$ and hence all elements of $F$ are in the $\sim$-class of $0$, a contradiction. Thus $0\not\in T$. Since $0\sim 1$, we also have $1\not\in T$.
\end{claimproof}

\begin{cla2}
\label{c:polynomial_in_top_component}
Let $t$ be a 4-ary term of $\mathcal{V}$, and let $p(x)=t(0,1,2,x)$. If $p(0)\in T$, then $p(2)\in T$.
\end{cla2}
\begin{claimproof}
    By using that $t$ is idempotent and acts on $F$ as a polymorphism of $\rel F$
    \begin{multline*} p(0)=t(t(0,1,2,0),t(0,1,2,0),t(0,1,2,0),t(0,1,2,0))\rightarrow \\
        t(0,0,t(1,1,2,1),t(1,1,2,1))\rightarrow t(0,1,2,0)=p(0),
    \end{multline*} and \[
    t(0,0,t(1,1,2,1),t(1,1,2,1))\leftrightarrow t(0,1,t(0,1,2,0),t(0,1,2,0)).
    \] So $p(0)\sim t(0,1,p(0),(p(0))$. By using that $\sim$ is a congruence and $p(0)\sim 2$, we obtain \[
    p(0)\sim t(0,1,p(0),p(0))\sim t(0,1,2,2)=p(2),
    \] hence $p(2)\in T$.
\end{claimproof}

\begin{cla3}
\label{c:in-edge_to_top_component}
    \noindent \begin{enumerate}
        \item Let $u\in F$, $v\in T$ and $u\rightarrow v$. Then there is a 4-ary term $t$ of $\mathcal{V}$ such that $u=p(1)$ and $p(2)\in T$ hold for the unary polynomial $p(x)=t(0,1,2,x)$.
        \item Let $u\in F$, $v\in T$, and $u\leftarrow v$. Then there is a 4-ary term $t$ of $\mathcal{V}$ such that $u=p(0)$ and $p(2)\in T$ hold for the unary polynomial $p(x)=t(0,1,2,x)$.
    \end{enumerate}
\end{cla3}
\begin{claimproof}
    We only prove the first statement, as the second is an obvious dual of the first. As $u\to v$ in $\rel F$, by the definition of $\rel F$ there is a 7-ary term $s$ of $\mathcal{V}$ such that \[
    \begin{array}{c}
         u = s(0, 1 ,2, 0 ,1 ,1 ,2), \\
         v = s(0,1,2,1, 0, 2, 0).
    \end{array}
    \]
    Let $t(x,y,z,u):=s(x,y,z,x,y,u,z)$ and $v':=t(0,1,2,2)$. Then \[
    \begin{array}{c}
         p(1)=t(0,1,2,1)= s(0, 1 ,2, 0 ,1 ,1 ,2)=u, \\
         p(2)=t(0,1,2,2)= s(0, 1 ,2, 0 ,1 ,2 ,2)=v'.
    \end{array}
    \] 
    Thus $u\to v'$ in $\rel F$. 
    
    We argue that $v'\in T$. Let  $t'(x,y,z,u):=s(x,y,z,y,x,z,u)$ and $p'(x)=t'(0,1,2,x)$. So \[
    p'(0)=t'(0,1,2,0)=s(0,1,2,1,0,2,0) = v \sim 2.
    \] 
    Hence by applying Claim 2  for $t'$ and $p'$, we obtain that 
    \[
    v'=s(0,1,2,0,1,2,2) \sim s(0,1,2,1,0,2,2)=t'(0,1,2,2)=p'(2)\sim 2,
    \] which concludes the proof of Claim 3.
\end{claimproof}

\begin{cla4}
\label{no_both-way_neighbor_of_2sim}
    Let $w\in F$ and  $w^+,w^-\in T$ such that $w^-\rightarrow w\rightarrow w^+$. Then $w\in T$.
\end{cla4}
\begin{claimproof}
    By Claim 3, there are 4-ary terms $t$ and $s$ of $\mathcal V$ such that $t(0,1,2,1)=s(0,1,2,0)=w$ and $t(0,1,2,2), s(0,1,2,2)\in T$. Now
    \begin{multline*}
    w=t(0,1,2,1)=s(t(0,1,2,1),t(0,1,2,1),t(0,1,2,1),t(0,1,2,1))\rightarrow \\ s(0,0,t(1,1,2,2),t(1,1,2,2))\rightarrow \\ s(0,1,t(2,2,2,2),t(0,0,0,0))=s(0,1,2,0)=w,
    \end{multline*}
    so 
    \begin{multline*}
    w\sim s(0,0,t(1,1,2,2),t(1,1,2,2))\sim \\s(0,1,t(0,1,2,2),t(0,1,2,2))\sim s(0,1,2,2)\in T,
    \end{multline*}
    which finishes the proof of Claim 4.
\end{claimproof}

 Let $R$ denote  the set of vertices in $F\setminus T$ from which there is an edge in $\rel F$ to some vertex of $T$, and let $P=F\setminus (T\cup R)$. Now we  define a map $\alpha$ from $\rel F$ to $\rel D$ by
\[ \alpha(x) = \begin{cases} 2 & \mbox{if } x\in T, \\ 1 & \mbox{if } x\in R, \\ 0 &  \mbox{if } x\in P. \end{cases} \] 
Then, by Claim 1, $1\in R$ and by Claims 1 and 4, $0\in P$. By Claim 4, $\rel F$ has no edges from $T$ to $R$, and by the definition of $\alpha$,  $\rel F$ has no edges from $P$ to $T$. Hence $\alpha$ is a retraction from $\rel F$ onto $\rel D$. Thus $\rel D$ is a retract of $\rel F$.
\end{proof}

\begin{corollary}\label{cor:hm_means_d_connected_is_extremely_connected}
Let $\var V$ be a Hobby-McKenzie variety.   Let $\alg A\in \var V$  be any algebra with generating set $D\subseteq A$. Let $\rel A$ be the $\rel D$-generated digraph of $\alg A$. Then all weak components of $\rel A$ are extremely connected. 
\end{corollary}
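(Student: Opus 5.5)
The plan is to reduce the statement to Lemma \ref{l:d_is_a_retract}, applied to the idempotent reduct $\var V_{\Id}$, and then to transport extreme connectivity to $\rel A$ by means of polymorphisms. Extreme connectivity is an equivalence, and every oriented path is a chain of edges. So it suffices to prove: \emph{for every edge $c\to d$ of $\rel A$ there is a symmetric $(c,d)$-path in $\rel A$.}

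\emph{Step 1 (the idempotent part).} Let $\alg B$ be the subalgebra of the $\var V_{\Id}$-reduct of $\alg A$ generated by $D$. Its elements are the values $t(0,1,2)$ with $t$ an idempotent ternary term of $\var V$. Let $\rel B$ be its $\rel D$-generated digraph, and let $\alg F'$ be the free algebra of $\var V_{\Id}$ on $\{0,1,2\}$ with $\rel F'$ the digraph freely generated by $\rel D$ in $\var V_{\Id}$.
\begin{itemize}
\item $\var V_{\Id}$ is an idempotent Hobby-McKenzie variety, since a variety and its full idempotent reduct are simultaneously Hobby-McKenzie. So Lemma \ref{l:d_is_a_retract} gives that $\rel F'$ is extremely connected.
\item The natural surjective homomorphism $h\colon\alg F'\to\alg B$ fixing $0,1,2$ maps the edge subalgebra of $\rel F'$ onto the edge subalgebra of $\rel B$. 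Both are generated by the edges of $\rel D$, so $h\times h$ carries generators to generators.
\item Hence every edge of $\rel B$ is the image of an edge of $\rel F'$, and $h$ maps symmetric paths to symmetric paths.
\end{itemize}
Therefore $\rel B$ is extremely connected. Every edge of $\rel B$ is an edge of $\rel A$, so any two elements of $B$ are joined by a symmetric path in $\rel A$. In particular this holds for any two elements of $D\subseteq B$.

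\emph{Step 2 (arbitrary edges).} Let $c\to d$ be an edge of $\rel A$. By the definition of the $\rel D$-generated digraph there is a $7$-ary term $s$ of $\var V$ with
$$c=s(0,1,2,0,1,1,2),\qquad d=s(0,1,2,1,0,2,0),$$
because $\rel D$ has exactly the seven edges $0\to0$, $1\to1$, $2\to2$, $0\to1$, $1\to0$, $1\to2$, $2\to0$.

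The term operation $s^{\alg A}$ is a polymorphism of $\rel A$, since $\rel A$ is compatible. It therefore maps symmetric paths of $(\rel A)^7$ to symmetric paths of $\rel A$.

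The tuples $e_1=(0,1,2,0,1,1,2)$ and $e_2=(0,1,2,1,0,2,0)$ lie in $D^7$. By Step 1, each coordinate pair is joined by a symmetric path in $\rel A$. Since $\rel A$ is reflexive, these paths can be padded by loops to a common length and run in parallel coordinatewise, giving a symmetric $(e_1,e_2)$-path in $(\rel A)^7$. Applying $s^{\alg A}$ yields a symmetric $(c,d)$-path in $\rel A$, which finishes the proof.

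\emph{Main obstacle.} The delicate point is Step 1: making sure the cited lemma applies in the idempotent setting. Concretely:
\begin{itemize}
\item $\var V_{\Id}$ is idempotent and Hobby-McKenzie, as required by Lemma \ref{l:d_is_a_retract}.
\item The surjection $h$ from the free digraph onto $\rel B$ is edge-surjective, which holds because both edge relations are generated by the edges of $\rel D$.
\end{itemize}
Step 2 sidesteps any analysis of the non-idempotent weak components $\rel F_u$ of Lemma \ref{freedigraph}. It only uses that the seven generating edges have endpoints in $D$, together with the observation that a power of a reflexive digraph in which $D$ is extremely connected has $D^7$ extremely connected.
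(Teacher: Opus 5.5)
Your proof is correct, but it is organized differently from the paper's.

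\textbf{The paper's route.} The paper first works in the digraph $\rel F$ freely generated by $\rel D$ in $\var V$. Via Lemma \ref{freedigraph} it identifies the weak components of $\rel F$ with the $\rel F_u$, and identifies $\rel F_{\id}$ with the digraph freely generated by $\rel D$ in $\var V_{\Id}$, which is extremely connected by Lemma \ref{l:d_is_a_retract}. It then shows each $\rel F_u$ is extremely connected, because $t(0,1,2)$ and $t(0,0,0)=u(0)$ both lie in the homomorphic image $t(\rel F_{\id}^3)$. Finally it proves that $\alpha\colon\rel F\to\rel A$ is surjective on edges, so each edge of an oriented path in $\rel A$ lies in the image of an extremely connected weak component of $\rel F$.

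\textbf{Your route.} You take from Lemma \ref{l:d_is_a_retract} only that $D$ lies in a single extreme component of $\rel A$. You then treat each edge $c\to d$ of $\rel A$ directly, applying the $7$-ary term that produces it to a symmetric path from $e_1$ to $e_2$ in $\rel A^7$. This is the same polymorphism trick the paper uses on $\rel F_u$, but applied to the generating edges rather than to generating vertices.

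\textbf{What this buys.} Your argument makes the detour through $\rel F$, the decomposition into the $\rel F_u$, and the edge-lifting argument unnecessary. Applied to $\alg A=\alg F$, it also recovers the paper's intermediate claim that all weak components of $\rel F$ are extremely connected. Avoiding Lemma \ref{freedigraph} is a genuine economy. Its hypothesis as stated, a clone homomorphism from $\clo(\var V_{\Id})$ to $\pol(\rel D)$, fails when $\var V$ is Hobby-McKenzie. Such a homomorphism would give $\rel D$ an idempotent Hobby-McKenzie polymorphism, contrary to Proposition \ref{DK}. So the paper implicitly relies only on those conclusions of that lemma that hold without the hypothesis.

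\textbf{Two minor remarks.} First, the edge-surjectivity of $h$ is not needed: a digraph homomorphism $\rel F'\to\rel A$ fixing $0,1,2$ already transports the symmetric paths. Second, the reflexivity of $\rel A$ that you use for padding holds because $(t(0,1,2),t(0,1,2))=t((0,0),(1,1),(2,2))$ for every ternary term $t$.
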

\begin{proof}  Let $\alg F\in \var V$ be the free algebra with free generating set $D$, and $\rel F$ the compatible digraph freely generated by $\rel D$ in $\var V$. First we prove that the weak components of $\rel F$ are extremely connected.
Notice that by Lemma \ref{freedigraph}, the compatible digraph freely generated by $\rel D$ in $\var V_{\Id}$ coincides with the weak component $\rel F_{\id}$ induced by the idempotent ternary terms in $\rel F$. Then by the preceding lemma, $\rel F_{\id}$ is extremely connected. Now we prove that any weak component of $\rel F$ is extremely connected.  By Lemma \ref{freedigraph}, we know that any weak component of $\rel F$ is of the form $\rel F_u$ for some
unary term operation $u$ of $\alg F$ and consists of all elements $t(0,1,2)\in F$ where  $t$ is a ternary term operation of $\alg F$ and for all  $0\leq i\leq2$,  $t(i,i,i)=u(i)$. Let $t(0,1,2)\in F_u$ for such $t$. As $\rel F$ is a compatible digraph of $\alg F$ and $t$ is a polymorphism of $\rel F$, $t|_{F_{\id}^3}$ is a homomorphism from $\rel F_{\id}^3$ to $\rel F_u$.  Since $\rel F_{\id}$ is extremely connected, so are $\rel F_{\id}^3$  and $t(\rel F_{\id}^3)$. Therefore, for any ternary term operation $t$ of $\alg F$ where $t(i,i,i)=u(i)$ for all  $0\leq i\leq2$, $t(0,1,2)$ and $t(0,0,0)$ are connected by a symmetric path in $\rel F_u$. Thus $\rel F_u$ is extremely connected.

Now we prove that the weak components of $\rel A$ are extremely connected. Let $\alpha$ be the homomorphism from $\alg F$ onto $\alg A$ such that $\alpha(d)=d$ for all $d\in D$. 

First we prove that $\alpha$ is also a homomorphism from $\rel F$  onto $\rel A$. Any edge of $\rel F$ is of the form $t_{\alg F}(0,1,2,1,0,1,2)\to t_{\alg F}(0,1,2,0,1,2,0)$ where $t$ is a 7-ary term of $\var V$. 
Since $\alpha$ is a homomorphism and $\alpha(i)=i$ for all $i\in D$,  
\begin{align*}\alpha(t_{\alg F}(0,1,2,1,0,1,2))&=t_{\alg A}(0,1,2,1,0,1,2)\text{ and }\\
\alpha(t_{\alg F}(0,1,2,0,1,2,0))&=t_{\alg A}(0,1,2,0,1,2,0). 
\end{align*}
In $\rel A$, we have that $t_{\alg A}(0,1,2,1,0,1,2)\to t_{\alg A}(0,1,2,1,0,1,2)$. So $\alpha$ preserves the edges of $\rel F$.

Now we give a proof that for every edge of the digraph $\rel A$ is of the form $\alpha(f)\to \alpha(g)$ where $f\to g$ in $\rel F$.
Let $a\to b$ be an arbitrary edge in $\rel A$, then there is 7-ary term $t$ of $\var V$ such that $a=t_{\alg A}(0,1,2,1,0,1,2)$ and $b=t_{\alg A}(0,1,2,0,1,2,0)$ in $\alg A$. By using that $\alpha(i)=i$ for all $i\in D$, we also have
 \begin{align*}
 a&=t_{\alg A}(\alpha(0),\alpha(1),\alpha(2),\alpha(1),\alpha(0),\alpha(1),\alpha(2))\text{ and }\\b&=t_{\alg A}(\alpha(0),\alpha(1),\alpha(2),\alpha(0),\alpha(1),\alpha(2),\alpha(0)).   
 \end{align*}
 Since $\alpha$ is a homomorphism,
 $$a=\alpha(t_{\alg F}(0,1,2,1,0,1,2))\text{ and }b=\alpha(t_{\alg F}(0,1,2,0,1,2,0)).$$ Now, via the preceding two equalities, the two vertices $$f=t_{\alg F}(0,1,2,1,0,1,2)\text{ and } g=t_{\alg F}(0,1,2,0,1,2,0)$$ 
  witness the fact that $a\to b$ is of the form $\alpha(f)\to \alpha(g)$ where $f\to g$ in $\rel F$.
 
 Suppose that there is an oriented path from  $a$ to $b$ in $\rel A$, i.e., there is sequence $a=a_0, \dots,a_n=b$ such that $a_i\to a_{i+1}$ or $a_{i+1}\to a_i$ in $\rel A$ for all $0\leq i<n$. Since any edge of $\rel A$  is the image of an edge of $\rel F$ under $\alpha$, for every $0\leq i<n$, there is a weak component of $\rel F$ whose image under $\alpha$ contains $a_i$ and $a_{i+1}$. Since the weak components of $\rel F$ are extremely connected, so are their homomorphic images. Therefore, for each $0\leq i<n$, there is a symmetric path connecting $a_i$ and $a_{i+1}$ in $\rel A$. So $a$ and $b$ are connected by a symmetric path in $\rel A$. Thus the weak components of $\rel A$ are extremely connected.
\end{proof}

    Let $\rel G$ and $\rel H$ be digraphs. A sequence $g=g_0,g_1,\dots,g_n=g'$ such that for all $0\leq i<n$, there is a homomorphism $\varphi_i\colon \rel H \to \rel G $  whose range contains $g_i$ and $g_{i+1}$ is called an {\em $\rel H$-path from $g$ to $g'$ in $\rel G$}. The equivalence that contains the pairs $(g,g')$ where $g$ and $g'$ are connected by an $\rel H$-path is called the {\em $\rel H$-equivalence of $\rel G$}. The blocks of the $\rel H$-equivalence of $\rel G$ are called the {\em $\rel H$-components of $\rel G$}. We say that $\rel G$ is {\em $\rel H$-connected} if the $\rel H$-equivalence of $\rel G$ has a single block.   
   
Let $\rel N= (\{0,1\};\ \{(0,0),(1,1),(0,1),(1,0)\})$. We note that for any reflexive digraph, the extreme equivalence coincides with the $\rel N$-equivalence. Moreover, for any reflexive digraph, the extreme equivalence is a subrelation of $\rel H$-equivalence if  $\rel H$ is an at least 2-element digraph.

 In \cite{O}, Ol\v s\'ak proved that every Taylor variety $\var V$ has an idempotent 6-ary term $o$ (we call $o$ an {\em Ol\v s\'ak term}) such that $\var V$ satisfies the following identities $$o(x,x,x,y,y,y)=o(x,y,y,x,x,y)=o(y,x,y,x,y,x).$$

\begin{lemma}
\label{l:Olsak_means_k_connected_is_d_connected}
    In a Taylor variety $\var V$, for any compatible reflexive digraph $\rel G$ in $\var V$, the  $\rel K$-equivalence of $\rel G$ coincides with the $\rel D$-equivalence of $\rel G$.
\end{lemma}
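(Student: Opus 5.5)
We first observe that one inclusion is trivial. The map $\kappa\colon K\to D$ given by $0\mapsto 0$, $1\mapsto 1$, $2\mapsto 2$, $3\mapsto 2$ is a surjective homomorphism $\rel K\to\rel D$. The edges $0\leftrightarrow 1$, $1\to 2$, $2\leftrightarrow 3$, $3\to 0$ go to $0\leftrightarrow 1$, $1\to 2$, a loop at $2$, and $2\to 0$. So for any homomorphism $\psi\colon\rel D\to\rel G$, the composite $\psi\kappa\colon\rel K\to\rel G$ has the same range. Hence every $\rel D$-path is a $\rel K$-path, and the $\rel D$-equivalence is contained in the $\rel K$-equivalence. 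This direction uses neither the Taylor assumption nor compatibility.

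For the converse, it suffices to show the following. For every homomorphism $\varphi\colon\rel K\to\rel G$, the four vertices $a_i=\varphi(i)$ lie in one $\rel D$-component. A symmetric edge $a\leftrightarrow b$ of $\rel G$ is always covered by a copy of $\rel D$: send $0\mapsto a$, $1\mapsto b$, $2\mapsto b$. So the extreme equivalence is contained in the $\rel D$-equivalence, and $a_0,a_1$ as well as $a_2,a_3$ are $\rel D$-related. It therefore remains to $\rel D$-connect $a_1$ with $a_2$ (or $a_3$ with $a_0$). Here we use the Taylor assumption via Ol\v s\'ak's theorem: $\var V$ has an idempotent Ol\v s\'ak term $o$, and since $\rel G$ is compatible, $o$ acts on $G$ as a polymorphism of $\rel G$. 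Thus $o\circ\varphi^6\colon\rel K^6\to\rel G$ is a homomorphism. For $x,y\in K$, the three tuples $(x,x,x,y,y,y)$, $(x,y,y,x,x,y)$ and $(y,x,y,x,y,x)$ all map to the same vertex $e(x,y)$, and $e(x,x)=a_x$ by idempotency.

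The plan is to build homomorphisms $\rel D\to\rel K^6$ coordinatewise, that is, six homomorphisms $\psi_1,\dots,\psi_6\colon\rel D\to\rel K$. We choose them so that the images of $0,1,2$ are among the three Ol\v s\'ak tuples, or are diagonal tuples. Composing with $o\circ\varphi^6$ then gives $\rel D$-images in $\rel G$ containing two of the vertices $a_1,a_2,e(x,y)$. Then we chain these: first show $e(1,2)$ is $\rel D$-connected to $a_1$, and then to $a_2$. The freedom in the Ol\v s\'ak identities is exactly what lets us switch between the three representations of $e(x,y)$. In each representation the pattern of $x$'s and $y$'s in the coordinates is different, so in each step we can pick the representation whose pattern is compatible with the edges of $\rel D$.

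The key resource is the list of homomorphisms $\rel D\to\rel K$. Any map of $D$ into the symmetric pair $\{0,1\}$ or into $\{2,3\}$ is a homomorphism. In addition there are maps that use the directed edges $1\to 2$ and $3\to 0$. The main obstacle is the combinatorial bookkeeping: choosing, in each coordinate, a homomorphism $\rel D\to\rel K$ so that the three vertices of $\rel D$ land simultaneously on the required Ol\v s\'ak tuples in all six coordinates. The directed edges of $\rel D$ ($1\to 2$ and $2\to 0$) must be matched against the only directed edges $1\to 2$ and $3\to 0$ of $\rel K$. I would first try $x=1$, $y=2$ and look for a copy of $\rel D$ with vertex $1$ sent to the diagonal tuple $(1,\dots,1)$ and vertex $2$ sent to one of the Ol\v s\'ak tuples. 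If that fails, I would insert intermediate vertices such as $e(0,2)$ or $e(1,3)$ and connect through them. The symmetric edges in $\{0,1\}$ and $\{2,3\}$ give slack here, since any coordinate can be moved inside those pairs freely.
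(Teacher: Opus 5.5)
\paragraph{Verdict: genuine gap.} Your opening steps are fine. The map $\kappa$ shows that the $\rel D$-equivalence is contained in the $\rel K$-equivalence, a symmetric edge is covered by a copy of $\rel D$, and so it suffices to $\rel D$-connect $a_1$ and $a_2$. The gap is in the core step. You leave it as ``bookkeeping'', but the mechanism you propose cannot work.

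\paragraph{Why the proposed mechanism fails.} Your claim that some homomorphisms $\rel D\to\rel K$ use the directed edges $1\to 2$ or $3\to 0$ is false. Let $\psi\colon\rel D\to\rel K$ be a homomorphism. Then $\psi(0)\leftrightarrow\psi(1)$ puts $\psi(0),\psi(1)$ into one of the pairs $\{0,1\}$, $\{2,3\}$. If $\psi(2)$ lay outside that pair, the walk $\psi(1)\to\psi(2)\to\psi(0)$ would leave the pair and come back in two steps. The only exits are $1\to 2$ and $3\to 0$. From $2$ there is no edge back into $\{0,1\}$, and from $0$ there is no edge back into $\{2,3\}$.

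Consequently, every coordinatewise homomorphism $\rel D\to\rel K^6$ has its image inside one product $B_1\times\dots\times B_6$ with $B_j\in\{\{0,1\},\{2,3\}\}$. The Ol\v s\'ak re-representations do not break this invariant: if $(x,x,x,y,y,y)\in\{0,1\}^6$, then $(x,y,y,x,x,y)$ and $(y,x,y,x,y,x)$ also lie in $\{0,1\}^6$. So any chain of the kind you describe, started at $(1,\dots,1)$, only visits images of tuples in $\{0,1\}^6$. It never reaches $(2,\dots,2)$, except through accidental coincidences in $\rel G$, which a general proof cannot rely on.

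In particular, your first attempt is impossible: no copy of $\rel D$ in $\rel K^6$ contains both $(1,\dots,1)$ and an Ol\v s\'ak tuple for $e(1,2)$. Inserting $e(0,2)$ or $e(1,3)$ does not help either, since their representations have mixed block patterns and are unreachable from $\{0,1\}^6$. This is not a coincidence: in $\rel K$ itself the $\rel D$-components are exactly $\{0,1\}$ and $\{2,3\}$.

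\paragraph{The missing idea.} The copy of $\rel D$ in $\rel G$ must close up only in $\rel G$, via an Ol\v s\'ak identity; it need not be the image of a copy of $\rel D$ in $\rel K^6$. Take a walk $T_0\to T_1\leftrightarrow T_2\to T_3$ in $\rel K^6$ with two properties. First, its coordinates may cross between the pairs, e.g.\ $1\to 2\leftrightarrow 3\to 0$. Second, its ends $T_0\neq T_3$ are two Ol\v s\'ak representations of the same element. Then the images of $T_0,T_1,T_2$ under $o\circ\varphi^6$ form a copy of $\rel D$ in $\rel G$.

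This is what the paper does. Write $a,b,c,d$ for $a_0,\dots,a_3$, so that $a\leftrightarrow b\to c\leftrightarrow d\to a$. The paper uses
\[
o(a,a,a,b,b,b)\to o(a,a,a,c,c,a)\leftrightarrow o(a,a,a,d,d,a)\to o(a,b,b,a,a,b)=o(a,a,a,b,b,b).
\]
It then uses a second such triangle, closed at its middle vertex:
\[
o(b,a,b,d,c,a)\to o(c,a,c,a,c,a)=o(a,a,a,c,c,c)\leftrightarrow o(b,a,a,d,c,d)\to o(b,a,b,d,c,a).
\]
This is attached to the first by $o(a,a,a,c,c,a)\leftrightarrow o(b,a,b,d,c,a)$. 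Together they put $a$ and $o(a,a,a,c,c,c)$ into one $\rel D$-component, and further triangles of the same kind handle $c$.
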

\begin{proof} 
Since there is a homomorphism from $\rel K$ to $\rel D$, the $\rel D$-equivalence is a subrelation of the $\rel K$-equivalence for any digraph. Let $\var V$ be a Taylor variety, and $\rel G$ a compatible reflexive digraph in $\var V$. Let $o$ be an Ol\v s\'ak polymorphism of $\rel G$. Let $\rel C$ be a $\rel K$-component of $\rel G$. To complete the proof of the lemma, we shall prove that $\rel C$ is included in a $\rel D$-component of $\rel G$.

Let $\varphi$ be a homomorphism from $\rel K$ to  $\rel C$ such that $$\varphi(0)=a,\ \varphi(2)=b,\ \varphi(3)=c \text{ and }\varphi(4)=d.$$ 
So for $a,b,c,d\in C$ have that $a\leftrightarrow b\rightarrow c\leftrightarrow d\rightarrow a$. First we prove that $a$ and $c$ are in the same $\rel D$-component of $\rel G$. Since $o$ is a polymorphism of $\rel G$, 
    \begin{align*}
    &a\leftrightarrow o(a,a,a,b,b,b)\rightarrow o(a,a,a,c,c,a)\leftrightarrow\\
    &o(a,a,a,d,d,a)\rightarrow o(a,b,b,a,a,b)=o(a,a,a,b,b,b).
    \end{align*}
    This implies that $a$, $o(a,a,a,b,b,b)$, and $o(a,a,a,d,d,a)$ are in the same $\rel D$-component. By
    \begin{align*}
    &o(a,a,a,d,d,a)\leftrightarrow o(a,a,a,c,c,a)\leftrightarrow o(b,a,b,d,c,a)\rightarrow\\ 
    &o(c,a,c,a,c,a)=o(a,a,a,c,c,c)\leftrightarrow o(b,a,a,d,c,d)\rightarrow o(b,a,b,d,c,a),
    \end{align*}
    $o(a,a,a,c,c,c)$ is also in this $\rel D$-component. By a similar argument, $c$ and $o(a,a,a,c,c,c)$ are in the same $\rel D$-component. Thus $a$ and $c$ are in the same $\rel D$-component of $\rel G$.

     Since $\rel C$ is $\rel K$-connected and the ranges of homomorphisms from $\rel K$ to $\rel C$ are included in the same $\rel D$-component, $\rel C$ is a subdigraph of a $\rel D$-component. So the $\rel K$-equivalence of $\rel G$ is included in the $\rel D$-equivalence of $\rel G$. 
\end{proof}

By Corollary \ref{cor:hm_means_d_connected_is_extremely_connected}, the preceding lemma yields the following.

\begin{corollary} \label{cor:hm_means_k_connected_is_extremely_connected}
In a Hobby-McKenzie variety $\var V$, for any compatible reflexive digraph, the $\rel K$-equivalence coincides with the extreme equivalence.
\end{corollary}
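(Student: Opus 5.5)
We prove two inclusions for a Hobby-McKenzie variety $\var V$ and a compatible reflexive digraph $\rel G$ of some $\alg G\in\var V$. Since $\rel K$ has at least two elements, the extreme equivalence of $\rel G$ is contained in its $\rel K$-equivalence, as remarked after the definition of $\rel H$-equivalence. A Hobby-McKenzie variety is Taylor, so Lemma \ref{l:Olsak_means_k_connected_is_d_connected} shows that the $\rel K$-equivalence of $\rel G$ equals its $\rel D$-equivalence. It therefore remains to show that the $\rel D$-equivalence is contained in the extreme equivalence. Because the extreme equivalence is transitive, it suffices to show that for every homomorphism $\varphi\colon\rel D\to\rel G$, the three points $\varphi(0),\varphi(1),\varphi(2)$ lie in one extreme component of $\rel G$.

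Fix such a $\varphi$ and let $\alg A$ be the subalgebra of $\alg G$ generated by $\varphi(D)$. We would like to apply Corollary \ref{cor:hm_means_d_connected_is_extremely_connected}, but $\varphi$ need not be injective. To avoid this, we instead use the free algebra $\alg F\in\var V$ on $\{0,1,2\}$, with $\rel F$ the compatible digraph freely generated by $\rel D$. Let $\alpha\colon\alg F\to\alg G$ be the homomorphism extending $\varphi$. The argument in the proof of Corollary \ref{cor:hm_means_d_connected_is_extremely_connected} shows that $\alpha$ maps edges of $\rel F$ to edges of $\rel G$. Indeed, an edge of $\rel F$ has the form $t_{\alg F}(0,1,2,1,0,1,2)\to t_{\alg F}(0,1,2,0,1,2,0)$, and its image is $t_{\alg G}$ applied to the coordinatewise images. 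Those images form an edge of $\rel G^7$ because $\varphi$ is a homomorphism, and $t_{\alg G}$ is a polymorphism of $\rel G$.

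Thus $\alpha$ is a digraph homomorphism $\rel F\to\rel G$. The points $0,1,2$ lie in one weak component of $\rel F$, since $1\to2\to0$. By Corollary \ref{cor:hm_means_d_connected_is_extremely_connected} applied to $\alg F$, that weak component is extremely connected. Homomorphisms map symmetric paths to symmetric paths, so $\varphi(0),\varphi(1),\varphi(2)$ lie in one extreme component of $\rel G$. This gives the remaining inclusion, and hence the $\rel K$-equivalence coincides with the extreme equivalence.

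The main obstacle is the step just described: a homomorphic image of $\rel D$ need not be a $\rel D$-generated digraph in the sense of the corollary, because $\varphi$ may identify vertices. Passing through the free algebra handles this, and the only point requiring care is checking that $\alpha$ preserves edges when $\varphi$ is non-injective. That check reduces to $\varphi$ being a homomorphism and the term operations being polymorphisms of $\rel G$.
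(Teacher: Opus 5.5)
Your proof is correct and follows the paper's structure: the extreme equivalence is contained in the $\rel K$-equivalence, Lemma \ref{l:Olsak_means_k_connected_is_d_connected} passes to the $\rel D$-equivalence, and Corollary \ref{cor:hm_means_d_connected_is_extremely_connected} places each homomorphic image of $\rel D$ in a single extreme component. The only difference is that last step. The paper splits into cases according to whether the induced image $\rel D_i$ is isomorphic to $\rel D$: if not, $\rel D_i$ is already extremely connected, and if so, the corollary is applied to the subalgebra generated by $D_i$. You instead map the free digraph $\rel F$ into $\rel G$ via the homomorphism extending $\varphi$, which handles non-injective or non-induced images uniformly and avoids that case check.
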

\begin{proof} Let $\alg G\in \var V$, and $\rel G$ a compatible reflexive digraph of $\alg G$. As it is obvious that the $\rel K$-equivalence of $\rel G$ includes the extreme equivalence of $\rel G$, it suffices to prove that every $\rel K$-component of $\rel G$ is extremely connected.

Let $\rel C$ be a $\rel K$-component of $\rel G$. Then by the preceding lemma  $\rel C$ is a $\rel D$-component of $\rel G$.
Let $g$ and $g'$ be two arbitrary vertices of $C$. By using that
$\rel C$ is $\rel D$-connected, there exist a sequence
$g=g_0,g_1,\dots,g_n=g'$ in $\rel C$ and for each $0\leq i<n$, a homomorphism $\varphi_i\colon \rel D \to \rel G $  whose range contains $g_i$ and $g_{i+1}$. Let $\rel D_i$ be the subdigraph induced by $\varphi_i(D)$ in $\rel C$. Let $\rel G_i$ be the $\rel D_i$-generated digraph of $\alg G_i$ where $\alg G_i$ is the subalgebra of $\alg G$ generated by $D_i$. Notice that by compatibility of $\rel G$,  $\rel G_i$ is subdigraph of $\rel G$. 

Now we argue that any two vertices of $\rel D_i$ are connected by a symmetric path in $\rel G$. If $\rel D_i$ is not isomorphic to $\rel D$, then, since there is onto homomorphism from $\rel D$ to $\rel D_i$,  $\rel D_i$ itself is extremely connected. On the other hand, if $\rel D_i$ is isomorphic to $\rel D$, then by Corollary \ref{cor:hm_means_d_connected_is_extremely_connected}, the weak component of $\rel G_i$ that includes $D_i$ is extremely connected. Thus in either ways, any two vertices of $\rel D_i$ are connected by a symmetric path in $\rel G$. 

This yields that $g$ and $g'$ are connected by a symmetric path in $\rel G$. Thus $\rel C$ is extremely connected.
\end{proof}

We use the preceding corollary to give a characterization of Hobby-McKenzie varieties. For any positive integer $n$, let $\rel C_n$ denote the reflexive directed $n$-cycle.

\begin{theorem} \label{main}
    For any variety $\var V$, the following are equivalent.
    \begin{enumerate}
        \item $\mathcal V$ is a Hobby-McKenzie variety.
        \item For any compatible reflexive digraph in $\var V$, the radical equivalence coincides with the extreme equivalence.
        \item For any compatible reflexive digraph in $\var V$, the strong equivalence coincides with the extreme equivalence.
        \item For any positive integer $n$, in the digraph freely generated by $\rel C_n$ in $\var V$, the weak component of $\rel C_n$ is extremely connected.
        \item In the digraph freely generated by $\rel C_3$ in $\var V$, the weak component of $\rel C_3$ is extremely connected.
        \item There exist 6-ary terms $s_i$  and $t_i$ , $1\leq i\leq n$, for $\var V$ such that $\var V$  satisfies the following identities
        
        \begin{align*}  
            t_1(x,x,y,y,z,z) &= x, \\
            t_i(x,x,y,y,z,z) &= s_i(x,y,y,z,z,x)\text{ for all } 1\leq i\leq n,\\
            s_i(x,x,y,y,z,z) &= t_i(x,y,y,z,z,x)\text{ for all } 1\leq i\leq n,\\
            t_{i}(x,x,y,y,z,z) &= t_{i-1}(x,y,y,z,z,x) \text{ for all } 1<i\leq n,\\
            y &= t_n(x,y,y,z,z,x).
        \end{align*}
    \end{enumerate}
\end{theorem}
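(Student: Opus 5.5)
Most implications are short. The real work is (1)$\Rightarrow$(2), and I would do it through Corollary \ref{cor:hm_means_k_connected_is_extremely_connected}. I would prove the cycle (1)$\Rightarrow$(2)$\Rightarrow$(3)$\Rightarrow$(4)$\Rightarrow$(5)$\Rightarrow$(6)$\Rightarrow$(1).

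\emph{(2)$\Rightarrow$(3).} (2) gives the radical--extreme coincidence for every compatible reflexive digraph. To pass to the strong equivalence I first need $\var V$ to be Taylor, and then Theorem \ref{th:taylor_equivalent_to_strong_equals_radical} says radical $=$ strong. Suppose $\var V$ is not Taylor. Then $\var V_{\Id}$ interprets in $\set$, hence in $\var K$ by Proposition \ref{DK}. By Lemma \ref{freedigraph}, $\rel K$ is then a retract of the weak component of $\rel K$ in the digraph freely generated by $\rel K$. The retraction maps that weak component onto $\rel K$. Since $\rel K$ is cyclic, the quotient of that weak component by its radical equivalence is not antisymmetric-trivial, so radical $\ne$ extreme there, contradicting (2).

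\emph{(1)$\Rightarrow$(2).} Let $\rel G$ be compatible and $\nu_0\subseteq\nu_1\subseteq\cdots$ as in the definition. I would prove by induction that $\nu_1=\nu_0$; then $\nu=\nu_0$. Because $\nu_0$ is a congruence, $\rel G/\nu_0$ is a compatible reflexive digraph in $\var V$. So it suffices to show that \emph{any} compatible $\rel G$ has no symmetric edge $A\leftrightarrow B$ between distinct $\nu_0$-classes. Such an edge means $a\to b$ and $b'\to a'$ with $a\,\nu_0\,a'$ and $b\,\nu_0\,b'$.

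I would pass to the auxiliary digraph $\rel H$ on $G$ with edge set $\nu_0\circ E$; this is reflexive and compatible. In $\rel H$ the map $0\mapsto a'$, $1\mapsto a$, $2\mapsto b$, $3\mapsto b'$ is a homomorphism from $\rel K$. Indeed $a'\leftrightarrow a$ and $b\leftrightarrow b'$ hold because $\nu_0\subseteq\nu_0\circ E$, while $a\to b$ and $b'\to a'$ hold because $E\subseteq\nu_0\circ E$. Corollary \ref{cor:hm_means_k_connected_is_extremely_connected} then puts $a,b$ in one extreme component of $\rel H$.

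\emph{Main obstacle.} This is the step I expect to be hardest: showing that the extreme equivalence of $\rel H$ is still $\nu_0$, i.e.\ that a symmetric $\rel H$-edge yields a symmetric $\rel G$-path. I would first try to reuse the Ol\v s\'ak-term manipulation from Lemma \ref{l:Olsak_means_k_connected_is_d_connected}. The aim is to shortcut the $\nu_0$-detours by applying $o$ coordinatewise to the symmetric paths witnessing $a\,\nu_0\,a'$, so that a genuine copy of $\rel D$ or $\rel K$ appears inside $\rel G$ itself. Failing that, I would argue by induction on the total length of the witnessing symmetric paths, replacing one $\nu_0$-step by a single symmetric edge at a time.

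\emph{(3)$\Rightarrow$(4).} Let $\rel C_n$ be $c_1\to c_2\to\cdots\to c_n\to c_1$, and take $t(c_1,\dots,c_n)$ in the weak component of $\rel C_n$, which by Lemma \ref{freedigraph} consists of the idempotent terms. Starting from $c_1=t(c_1,\dots,c_1)$, advance each coordinate separately along the cycle; since loops are present this gives a directed path to $t(c_1,\dots,c_n)$. Advancing each coordinate further around to $c_1$ gives a directed path back. Hence the weak component is strongly connected, and by (3) it is extremely connected.

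\emph{(4)$\Rightarrow$(5)$\Rightarrow$(6)$\Rightarrow$(1).} (4)$\Rightarrow$(5) is trivial. For (5)$\Rightarrow$(6), write a symmetric path from $x$ to $y$ in the weak component of $\rel C_3=(x\to y\to z\to x)$. Every edge of the free digraph is a 6-ary term applied to the generating edges $(x,x),(x,y),(y,y),(y,z),(z,z),(z,x)$. Reading off each vertex and each reverse edge of the path as terms $t_i,s_i$ gives exactly the displayed identities. For (6)$\Rightarrow$(1): substituting $x=y=z$ shows all $t_i,s_i$ are idempotent. If $\var V$ were not Hobby-McKenzie, $\var V_{\Id}$ would interpret in $\slt$ and hence in $\var D$ by Proposition \ref{DK}, so the identities would hold for idempotent polymorphisms of $\rel D$. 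Evaluating on the directed cycle $0\to1\to2\to0$ of $\rel D$ would then give a symmetric path from $0$ to $1$ through vertices including $2$'s component, whereas $2$ lies in no symmetric edge of $\rel D$. Placing the variables so that the path is forced through the extreme component of $2$ then gives a contradiction.
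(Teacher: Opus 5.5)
The gap is in (1)$\Rightarrow$(2). It is not just a hard step left open: your auxiliary digraph $\rel H=(G;\nu_0\circ E)$ makes the reduction circular. A symmetric edge of $\rel H$ between $x$ and $y$ means exactly that $x\,\nu_0\,u\to y$ and $y\,\nu_0\,v\to x$ for some $u,v$. In other words, the $\nu_0$-classes of $x$ and $y$ are joined by edges of $\rel G$ in both directions. So ``the extreme equivalence of $\rel H$ is $\nu_0$'' is equivalent to the statement you set out to prove, namely that $\rel G/\nu_0$ has no double edge between distinct vertices. Your configuration already gives a symmetric $\rel H$-edge between $a'$ and $b$ directly ($a'\,\nu_0\,a\to b$ and $b\,\nu_0\,b'\to a'$). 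Hence invoking Corollary~\ref{cor:hm_means_k_connected_is_extremely_connected} in $\rel H$ gains nothing. Neither fallback (Ol\v s\'ak manipulations, or ``replacing one $\nu_0$-step at a time'') explains how the shortening is achieved inside a fixed $\rel G$.

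The missing idea is to compose with a \emph{single} symmetric step, and to induct over all compatible digraphs of $\var V$ at once.
\begin{itemize}
\item The paper takes a counterexample ($a\to b$, $b'\to a'$, $a\,\nu_0\,a'$, $b\,\nu_0\,b'$, $(a,b)\notin\nu_0$) that minimizes $d$. Here $d$ is the larger of the lengths of shortest symmetric $(a,a')$- and $(b,b')$-paths, minimized over all compatible reflexive digraphs in $\var V$.
\item If $d\le 1$, then $0\mapsto a'$, $1\mapsto a$, $2\mapsto b$, $3\mapsto b'$ is a homomorphism $\rel K\to\rel G$. So Corollary~\ref{cor:hm_means_k_connected_is_extremely_connected} applies in $\rel G$ directly.
\item If $d>1$, pass to the compatible reflexive relation $\rho=\{(x,y)\colon \exists u\ x\to u\leftrightarrow y\}$. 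It contains $E$, and $x\leftrightarrow y\leftrightarrow z$ yields a symmetric $\rho$-edge between $x$ and $z$. Thus $d$ drops, and minimality gives a symmetric $\rho$-path from $a$ to $b$.
\item Unlike with $\nu_0\circ E$, each symmetric $\rho$-edge is witnessed by $x\to u\leftrightarrow y\to v\leftrightarrow x$. This is a homomorphic image of $\rel K$ in $\rel G$ itself. So Corollary~\ref{cor:hm_means_k_connected_is_extremely_connected}, applied in $\rel G$, converts the $\rho$-path into a symmetric path of $\rel G$.
\end{itemize}
This is what breaks the circularity.

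The other implications are essentially right, and your routes through $\rel K$ and $\rel D$ are legitimate, slightly shorter alternatives to the paper's. Two repairs are needed.
\begin{itemize}
\item In (2)$\Rightarrow$(3), the reason should be stated precisely. $\rel K$ is radically connected, so $0$ and $2$ are radically equivalent in the free digraph. The retraction rules out a symmetric path between them.
\item In (6)$\Rightarrow$(1), your placement $x=0$, $y=1$, $z=2$ gives no contradiction, since $0\leftrightarrow 1$ in $\rel D$. Use $x=1$, $y=2$, $z=0$ instead. This yields a symmetric path from $1$ to $2$, which $\rel D$ does not have.
\end{itemize}
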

\begin{proof}
Due to the fact that $\rel C_n$ is a strongly connected digraph,
the weak component of  $\rel C_n$ coincides with the strong component of  $\rel C_n$ in the compatible digraph freely generated by $\rel C_n$ in any variety $\var V$. Hence the implications $(3)\Rightarrow (4)\Rightarrow (5)$ are obvious. Notice that $(6)$ holds if and only if in the free digraph freely generated by the isomorphic copy $(\{x,y,z\};\{(x,x),(x,y),(y,y),(y,z),(z,z),(z,x)\})$ of $\rel C_3$ in $\var V$, there is a symmetric path of length $n$ from $x$ to $z$. Hence, $(5)\Rightarrow(6)$.

For  $(1)\Rightarrow(2)$, we assume that $\alg G\in\var V$, $\rel G$ is a compatible reflexive digraph of $\alg G$, and $\rel H$ is a radical but not an extreme component of $\rel G$. Hence, there are extreme components $\rel A$ and $\rel B$ in $\rel H$ such that in the quotient of $\rel G$ by its extreme congruence, we have $A\leftrightarrow B$. This means that there are vertices $a,a'\in A$ and $b,b'\in B$ such that $a\rightarrow b$ and $b'\rightarrow a'$. Let $d$ be the maximum of the lengths of a shortest symmetric $(a,a')$-path and  a shortest symmetric $(b,b')$-path.  We assume that $\rel G$, $\rel H$, $a,a',b$, and $b'$ are chosen so that $d$ is minimal.

Clearly, $d$ is not $0$, as this would imply that $a=a'$ and $b=b'$, so there would be a double edge between $a$ and $b$ in $\rel G$. If $d=1$, 
then the subdigraph of $\rel G$ induced by $\{a,a',b,b'\}$ is isomorphic to $\rel D$ or $\rel K$. So $(a,b)$ is in the $\rel K$-equivalence of $\rel G$. Then by Corollary \ref{cor:hm_means_k_connected_is_extremely_connected}, $a$ and $b$ are in the same extreme component of $\rel G$, a contradiction.

Hence, $d>1$. Then we define the reflexive relation $\rho$ on $G$ by \[
(x,y)\in\rho\,\Leftrightarrow\,\exists u: x\rightarrow u \leftrightarrow y.
\]

Clearly, $\rho$ is a subalgebra of $\alg G^2$. Hence $\rel G'=(G;\rho)$ is a compatible reflexive digraph in $\var V$. Notice that by the reflexivity of $\rel G$, the edges of $\rel H$ are contained in $\rho$. Let $\rel H'$ be the strong component of $\rel G'$ including $\rel H$.
Clearly, if $x\leftrightarrow y\leftrightarrow z$ in $\rel G$, then $(x,z)\in\rho$. Therefore, the maximum of the lengths of the shortest symmetric $(a,a')$- and $(b,b')$-paths in $\rel H'$ is smaller than $d$.

Now by the minimality of $d$, the vertices $a$ and $b$ are in the same extreme component of $\rel H'$. So there exists a symmetric $(a,b)$-path in $\rel H'$. Notice that if $(x,y),(y,x)\in\rho$, then $x$ and $y$ are in the range of a homomorphism from $\rel K$ to $\rel G$. Consequently, $(a,b)$ is in the  $\rel K$-equivalence, and hence by Corollary \ref{cor:hm_means_k_connected_is_extremely_connected} there exists a symmetric $(a,b)$-path in $\rel G$, a contradiction.

 Now we prove that $(2)\Rightarrow (3)$. If $\var V$ is a Taylor variety, then, by Theorem \ref{th:taylor_equivalent_to_strong_equals_radical}, all compatible strongly connected digraphs in $\var V$ are radically connected, and hence by (2), they are extremely connected. So it suffices to prove that $\var V$ is a Taylor variety. Let us suppose that $\var V$ is a not Taylor variety, then $\var V_{\Id}$ interprets in $\set$, hence in $\var D$ as well.
 Let $\rel F_0$ be the digraph freely generated by $\rel D$ in $\var V_{\Id}$. We note that $$F_0=\{t(0,1,2)\colon t \text{ is a ternary idempotent term of }\var V \}.$$
  Then, by Lemma 3.3 in \cite{BGMZ1}, $\rel F_0$ is a weak component of the digraph $\rel F$ freely generated by $\rel D$ in $\var V$, and $\rel D$ is a retract of $\rel F_0$.  Let $\nu$ be the extreme equivalence of $\rel F_0$. For any ternary idempotent term $t$ of $\var V$,  as $t$ acts on $F$ as a polymorphism of $\rel F$, $$1=t(1,1,1)\to t(0,1,2)\to t(0,0,0)=0.$$ Then, by $(0,1)\in \nu$, for every idempotent term $t$ of $\var V$, 
  $$t(0,0,0)/\nu\leftrightarrow t(0,1,2)/\nu\text{ in } \rel F_0/\nu.$$ 
  So the extreme equivalence of $\rel F_0/\nu$ is the full relation. Therefore, $\rel F_0$ is radically connected, and so $F_0$ is a block of the radical equivalence of $\rel F$. Hence by (2), $\rel F_0$ is extremely connected. So its retract $\rel D$ must also be extremely connected, a contradiction. Thus $\var V$ is a Taylor variety.

Finally, we prove that $(6)\Rightarrow(1)$.  Let suppose that item (6) holds, and $\var V$  is still not a Hobby-McKenzie variety. Then $\var V_{\Id}$ interprets in $\slt$. Clearly, all terms which occur in the identities of (6) are idempotent. Consequently, (6) holds in $\slt$. Then by the remark at the beginning of the proof  of the present theorem, the compatible digraph freely generated by $\rel C_3$ in $\slt$ must have a symmetric path from $x$ to $z$. This freely generated digraph has seven vertices, and is easy to construct, see Figure \ref{freely_generated_by_c3_in_semilattice}. Evidently, it has no symmetric path from  $x$ to $z$, a contradiction.
\end{proof}





\begin{figure}[H] 
\centering
\includegraphics[scale=1]{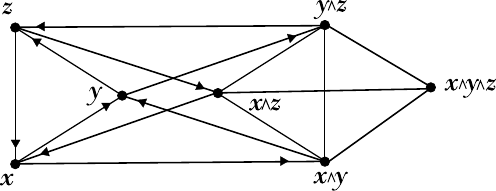}
\caption{The digraph freely generated by $\rel C_3$ in the variety of semilattices}
\label{freely_generated_by_c3_in_semilattice}
\end{figure}

The preceding theorem yields a new  characterization of Hagemann-Mitschke varieties.

\begin{corollary}\label{cor} For any variety $\var V$, the following are equivalent.
\begin{enumerate}
    \item  $\var V$ is a Hagemann-Mitschke variety. 
    \item For any compatible reflexive digraph in $\var V$, the weak equivalence coincides with the strong equivalence.
    \item For any compatible reflexive digraph  in $\var V$, the weak equivalence coincides with the radical equivalence.
    \item For any compatible reflexive digraph in $\var V$, the weak equivalence coincides with the extreme equivalence.
\end{enumerate}
\end{corollary}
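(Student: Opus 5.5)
The plan is to prove the cycle of implications $(1)\Rightarrow(4)\Rightarrow(2)\Rightarrow(3)\Rightarrow(1)$, where the trivial inclusions among the four equivalences from Figure \ref{conn} (extreme $\subseteq$ radical $\subseteq$ strong $\subseteq$ weak) do most of the work. The two implications $(4)\Rightarrow(3)\Rightarrow(2)$ are immediate: if the weak equivalence equals the extreme one, then every equivalence squeezed between them equals the weak equivalence too. So the content lies in $(1)\Rightarrow(4)$ and in returning from (2) to (1).

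For $(1)\Rightarrow(4)$, I would combine Theorem \ref{HoM} with Theorem \ref{main}. A Hagemann-Mitschke variety is a Hobby-McKenzie variety, as noted in the Introduction via the interpretability characterizations. Hence by Theorem \ref{main} $(1)\Rightarrow(2)$, the radical and extreme equivalences coincide on every compatible reflexive digraph. By Theorem \ref{HoM} $(1)\Rightarrow(3)$, the weak and radical equivalences coincide. Chaining the two gives (4).

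For $(2)\Rightarrow(1)$, the structure is to get Hobby-McKenzie first and then upgrade to Hagemann-Mitschke.

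1. I would first note that (2) implies Theorem \ref{main}(3). Since the strong equivalence lies between the extreme and weak ones, the weak $=$ strong hypothesis alone does not give strong $=$ extreme, so instead I would use Theorem \ref{main}(5) directly. In the digraph freely generated by $\rel C_3$, the weak component of $\rel C_3$ is strongly connected, because $\rel C_3$ is strongly connected and polymorphisms preserve directed paths coordinatewise. That is not yet extreme connectivity, though, so this route alone stalls.

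2. The cleaner route is the contrapositive. Suppose $\var V$ is not Hagemann-Mitschke. By Valeriote–Willard \cite{VW}, $\var V_{\Id}$ interprets in $\dl$, hence in $\slt$ and the two-element chain. So there is a clone homomorphism from $\clo(\var V_{\Id})$ to $\pol(\rel C)$, where $\rel C$ is the reflexive two-element chain $0\to 1$.

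3. Apply Lemma \ref{freedigraph} to $\rel S=\rel C$, which is reflexive and weakly connected. The compatible digraph $\rel F$ freely generated by $\rel C$ in $\var V$ has a weak component $\rel F_{\id}$ that retracts onto $\rel C$.

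4. $\rel F_{\id}$ is weakly connected but contains no directed path from $1$ to $0$, since applying the retraction would yield one in $\rel C$. So $0$ and $1$ are weakly but not strongly equivalent, and (2) fails.

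This contrapositive argument gives $(2)\Rightarrow(1)$ directly, which by the same reasoning also yields $(3)\Rightarrow(1)$ and $(4)\Rightarrow(1)$. The corrected cycle is therefore $(1)\Rightarrow(4)\Rightarrow(3)\Rightarrow(2)\Rightarrow(1)$.

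The main obstacle I expect is justifying that the non-Hagemann-Mitschke condition yields a clone homomorphism into $\pol(\rel C)$, rather than just into the clone of a distributive lattice. This holds because the two-element lattice generates $\dl$, and its operations are monotone, so they are polymorphisms of $0\to 1$. I would also need to check that Lemma \ref{freedigraph} applies in the trivial-variety case. That case is vacuous, since a trivial variety is Hagemann-Mitschke.
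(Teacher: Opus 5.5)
Your proof is correct, but it is organized differently from the paper's.

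\textbf{The paper's route.} It proves $(1)\Rightarrow(2)$ directly from the Hagemann--Mitschke characterization. It then gets $(2)\Rightarrow(4)$ via Theorem \ref{main}, $(4)\Rightarrow(3)$ by inclusion, and $(3)\Rightarrow(1)$ via the result on radical equivalences from \cite{GMZ2}.

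\textbf{Your route.} You obtain $(1)\Rightarrow(4)$ by chaining Theorem \ref{HoM} (weak $=$ radical) with Theorem \ref{main} (radical $=$ extreme). You descend $(4)\Rightarrow(3)\Rightarrow(2)$ using extreme $\subseteq$ radical $\subseteq$ strong $\subseteq$ weak. You close the cycle with an explicit contrapositive proof of $(2)\Rightarrow(1)$:
\begin{enumerate}
\item Valeriote--Willard, together with monotonicity of lattice operations, gives a clone homomorphism from $\clo(\var V_{\Id})$ into the polymorphism clone of the reflexive chain $0\to 1$.
\item Lemma \ref{freedigraph} then yields a weak component of the free digraph that retracts onto that chain.
\item Hence $0$ and $1$ are weakly but not strongly equivalent.
\end{enumerate}

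\textbf{Comparison.} The advantage of your arrangement is that hypothesis (2) is actually used to derive (1). The paper's step $(2)\Rightarrow(4)$ appeals to ``Hagemann--Mitschke varieties are Hobby--McKenzie,'' so it tacitly relies on $(2)\Rightarrow(1)$; your argument makes that implication explicit. The paper's arrangement is lighter on external input, since it needs neither \cite{VW} nor the retraction lemma.

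You could also get $(2)\Rightarrow(1)$ more cheaply from the Hagemann--Mitschke characterization itself. In the digraph freely generated by $0\to 1$, condition (2) gives a directed path of some length $k$ from $1$ to $0$. The homomorphism $0\mapsto a$, $1\mapsto b$ transports this path to any edge $a\to b$ of any compatible reflexive digraph, so $\var V$ is $(k+1)$-permutable.

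\textbf{Small slips.}
\begin{enumerate}
\item $\dl$ does not interpret in $\slt$; the interpretation goes the other way. So ``hence in $\slt$'' is false, though you never use it.
\item The cycle announced in your first sentence contains $(2)\Rightarrow(3)$, which is not immediate and is not what you prove. The corrected cycle at the end is the right one.
\item Your abandoned step 1 can simply be deleted.
\end{enumerate}
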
 
\begin{proof}
By the characterization of $n$-permutable varieties of Hagemann and Mitschke that we mentioned in the Introduction, $(1)\Rightarrow (2)$ is clear. Hagemann-Mitschke varieties are Hobby-McKenzie varieties, hence by
the equivalence of the first  two conditions in the preceding theorem, $(2) \Rightarrow (4)$ is also clear. Since the extreme equivalence is included in the radical equivalence, $(4)\Rightarrow (3)$.  Finally, by Theorem \ref{th:taylor_equivalent_to_strong_equals_radical}, we obtain  $(3)\Rightarrow (1)$.
\end{proof}
\section{Conclusion}
We summarize the main results we achieved in this article. We gave various characterizations of the Taylor, the Hobby-McKenzie, and the Hagemann-Mitschke varieties by the use of four types of connectivity notions for their compatible digraphs. It turned out that a variety is Hagemann-Mitschke if and only if for every compatible digraph in the variety, the weak equivalence coincides with  either the strong, the radical or the extreme equivalence. A variety is Hobby-McKenzie if and only if for every compatible digraph in the variety, the extreme equivalence coincides with  the strong or the radical equivalence. A variety is Taylor if and only if  for every compatible digraph in the variety, the strong equivalence coincides with  the radical equivalence. Note that there are only 6 ways to collapse two equivalences defined by the 4 connectivity notions we introduced, and our results obtained  in this article exhaust all the 6 cases.


\begin{thebibliography}{}
\bibitem{BGMZ0} Bodor, B, Gyenizse, G, Mar\'oti, M and Z\'adori, L; Taylor is prime, IJAC, 34/06, 857--879 (2024).
\bibitem{BGMZ1} Bodor, B, Gyenizse, G, Mar\'oti, M, and Z\'adori, L; The filter of interpretability types of Hobby-McKenzie varieties is prime, submitted https://www.math.u-szeged.hu/~zadori/publications/publ37.pdf (2024).
\bibitem{GT} Garcia O. C and Taylor, W; The lattice of interpretability types of varieties, Mem. Amer. Math. Soc. 50 (1984) v+125.
\bibitem {GMZ2} Gyenizse, G, Maróti, M, and Zádori, L; Reflexive digraphs in Taylor varieties, submitted https://www.math.u-szeged.hu/~zadori/publications/publ38.pdf (2025).
\bibitem{HaM} Hagemann, J and Mitschke, A; On n-permutable congruences, Algebra Universalis 3, 8--12 (1973)
\bibitem{HM} Hobby, D and McKenzie, R; The structure of finite algebras, Contemporary Mathematics 76, American Mathematical Society, Providence, RI, 1988.
\bibitem{KK} Kearnes, K. A and  Kiss, E. W; The shape of congruence lattices, Mem. Amer. Math. Soc. 222 (2013) viii+169.
\bibitem{KKVW} Kozik, M, Krokhin, A, Valeriote, M, and Willard, R; Characterizations of several  Maltsev conditions, Algebra Universalis 73, 205--224 (2015).
\bibitem{LLP} Larivi\` ere, I, Larose, B, and  Pazmi\~ no Pullas, D E; Surjective Polymorphisms of Directed Reflexive Cycles, Algebra Universalis 85/4, 1--28 (2024).
\bibitem{MZ} Mar\'oti, M and Z\'adori, L; Reflexive digraphs with near unanimity polymorphisms, Discrete Mathematics 12/15, 2316--2328 (2012). 
\bibitem{O} Ol\v s\'ak, M; The weakest nontrivial idempotent equations, Bul. Lond. Math. Soc. 49/6, 1028--1047 (2017).
\bibitem{T} Taylor, W; Varieties obeying homotopy laws, Canad. J. Math., 29/3, 498--527 (1977).
\bibitem{VW} Valeriote, M and Willard, R; Idempotent $n$-permutable varieties, Bulletin of the London Mathematical Society 46, 870--880, (2014).
\end{thebibliography}
\end{document}